\documentclass[a4paper]{amsart}
\usepackage[utf8]{inputenc}
\usepackage{amsmath}
\usepackage{amssymb}
\usepackage{verbatim}
\usepackage{geometry}
\usepackage{tikz}
\usepackage{soul}
\usepackage{esint}
\usepackage[colorinlistoftodos]{todonotes}
\usepackage{menukeys}
\usepackage[colorlinks=true,urlcolor=blue,citecolor=red,linkcolor=blue,linktocpage,pdfpagelabels,bookmarksnumbered,bookmarksopen]{hyperref} 
\usepackage[hyperpageref]{backref}

\newtheorem{thm}{Theorem}[section]
\newtheorem{cor}[thm]{Corollary}
\newtheorem{prop}[thm]{Proposition}
\newtheorem{lemma}[thm]{Lemma}
\theoremstyle{definition}

\newtheorem{rem}[thm]{Remark}
\newtheorem*{ack}{Acknowledgments}

\numberwithin{equation}{section}

\subjclass[2010]{35P15, 49J40, 47A75, 52A20}
\keywords{Sobolev embeddings, convex sets, Laplacian eigenvalues, torsional rigidity, distance function, inradius.}

\title[Sharp Makai--type inequalities]{Sharp Makai--type inequalities\\
 for  the best  Poincar\'e-Sobolev constants}
\author[Pisante]{Giovanni Pisante}
\address[G.\ Pisante]{Dipartimento di Matematica e Fisica
	\newline\indent
Università degli Studi della Campania “Luigi Vanvitelli"
	\newline\indent
	viale Lincoln 5, 81100 Caserta, Italy}
\email{giovanni.pisante@unicampania.it}

\author[Prinari]{Francesca Prinari}
\address[F. Prinari]{Dipartimento di Scienze Agrarie, Alimentari e Agro-ambientali
\newline\indent 
Universit\`a di Pisa
\newline\indent
Via del Borghetto 80, 56124 Pisa, Italy}
\email{francesca.prinari@unipi.it}

\begin{document}
	
\begin{abstract}
Given  a  bounded convex open set  $\Omega\subseteq \mathbb R^N$, we prove that the Poincar\'e-Sobolev constants $\lambda_{p,q}(\Omega)$ can be bounded from below by the $p$-power of the  ratio between the perimeter of $\Omega$ and a suitable power of  its volume, with an optimal constant which is explicitly given. This generalises an old result for  torsional rigidity due to Makai when $N=2$. The proof   relies on  new geometric optimal bounds for the Lebesgue norms of the distance function from the  boundary which are of independent interest. These results allow us to give a complete picture of the sharp inequalities for $\lambda_{p,q}(\Omega)$  in terms of suitable powers of  perimeter, inradius and  volume of $\Omega$. \end{abstract}
	
\maketitle
	
\begin{center}
	\begin{minipage}{11cm}
		\small
		\tableofcontents
	\end{minipage}
\end{center}

\section{Introduction}

The aim of this paper is to provide a sharp lower bound for the quantity
\begin{equation}\label{prodotto}
\mathcal{F}_{p,q}(\Omega)=\lambda_{p,q}(\Omega)\left(\frac{|\Omega|^{1-\frac{1}{p}+\frac{1}{q}}}{P(\Omega)}\right)^p,
\end{equation}
where $\Omega\subsetneq \mathbb{R}^N$ is a bounded convex open set and either $1\le q<p<\infty$ or $1<q=p<\infty$.
Here $P(\Omega)$ denotes the perimeter of $\Omega$ in the sense of De Giorgi,  $|\Omega|$ is the $N$-dimensional Lebesgue measure of $\Omega$ and 
$\lambda_{p,q}(\Omega)$ {is the  best constant for the  Poincar\'e-Sobolev  embedding
$
W^{1,p}_0(\Omega)\hookrightarrow L^q(\Omega),
$
that is }
\[
\lambda_{p,q}(\Omega):=\inf_{\psi  \in C^{\infty}_0(\Omega)} \left\{\int_{\Omega} |\nabla \psi|^p \, dx\, :\, \int_{\Omega} |\psi|^q \, dx=1\right\}.
\]

Here and in what follows, $L^{\frac{p\,q}{p-q}}(\Omega)$ stands for $L^\infty(\Omega)$ when $p=q$ and we will write $\lambda_{p}(\Omega)$ in place of $\lambda_{p,p}(\Omega)$.

In general, the optimization problems for the shape functionals $\mathcal F_{p,q}$ are ill-posed, even within the class of simply connected open sets. Indeed, over simply connected sets, the infimum of $\mathcal F_{p,q}$ is $0$ (cf.\ Proposition \ref{prop:infimum}) and, for $p=2$ and every $1\leq q<2$, the supremum of $\mathcal F_{2,q}$ is $+\infty$ (cf.\ Proposition \ref{prop:supremum}).  For this reason, throughout the paper, we restrict ourselves  to considering convex sets, where the corresponding extremal problems become meaningful and sharp inequalities can be proved.
\subsection{Background}
In the class of convex bounded open sets, an optimal upper bound for $\mathcal F_{p,q}$ has been provided in the planar case by Poly\'a in \cite{Po} when $p=2$ and $q\in \{1,2\}$. More precisely, he showed that
\begin{equation}\label{eq:upper boundpp}
\lambda_{2}(\Omega)\leq \left( \frac{\pi}{2}\right)^2  \left(\frac{P(\Omega)}{|\Omega|}\right)^2,
\qquad \hbox{i.e. } \ \mathcal F_{2,2}(\Omega) \leq \left( \frac{\pi}{2}\right)^2,
\end{equation}
and
\begin{equation}\label{eq:upper bound-torsionp1}
T_{2}(\Omega)\ge \frac{1}{3}\,\frac{|\Omega|^3}{(P(\Omega))^2},
\qquad \hbox{i.e. } \ \mathcal F_{2,1}(\Omega) \le 3,
\end{equation}
where $T_p(\Omega)=(\lambda_{p,1}(\Omega))^{-1}$ is the so-called {\sl $p$-torsional rigidity} of $\Omega$.

For the same values of the parameters $p$ and $q$, the question of minimizing $\mathcal F_{p,q}$ appears in the pioneering paper by Makai \cite{Makai}, where it is proven that
\begin{equation}\label{eq:makai01}
\lambda_{2}(\Omega)\geq \left(\frac{\pi}{4} \frac{P(\Omega)}{|\Omega|}\right)^{2},
\qquad \hbox{i.e. } \ \mathcal F_{2,2}(\Omega) \geq \left(\frac{\pi}{4}\right)^2,
\end{equation}
and
\begin{equation}\label{makaiT}
T_{2}(\Omega)\le \frac{2}{3}\,\frac{|\Omega|^3}{(P(\Omega))^2},
\qquad \hbox{i.e. } \ \mathcal F_{2,1}(\Omega) \geq \frac{3}{2},
\end{equation}
for every bounded convex open set $\Omega\subseteq \mathbb R^2$.

Later on, for some restricted values of the parameters $p,q$ and for general dimension $N$, the optimization problems associated with the functional $\mathcal{F}_{p,q}$ have been addressed in several papers.

Let us explicitly remark that the exponent $1-\frac1p+\frac1q$ in \eqref{prodotto} is dictated by scaling considerations.
Indeed, for $t>0$,  setting $\Omega_t:=t\Omega$, taking into account that 
\[
\lambda_{p,q}(\Omega_t)=t^{-p+N\left(1-\frac{p}{q}\right)}\,\lambda_{p,q}(\Omega),
\]
we have $\mathcal F_{p,q}(\Omega_t)=\mathcal F_{p,q}(\Omega)$ for every $t>0$.

Now, the upper bound \eqref{eq:upper boundpp} has been extended to every dimension $N\geq 2$ in \cite{JoSt}; a similar estimate has been established in the anisotropic case in \cite[Theorem 4.1]{DPG} when $1<p<\infty$.
In the general case $N\geq 2$, $1<p<\infty$ and $1\leq q\leq p$, the maximisation problem for $\mathcal F_{p,q}$ has been discussed in \cite[Main Theorem]{Bra1} where, for every bounded convex open set $\Omega\subseteq \mathbb R^N$, the following extension of \eqref{eq:upper bound-torsionp1} is provided
\begin{equation}\label{eq:upper bound}
\mathcal F_{p,q}(\Omega)\leq \left( \frac{\pi_{p,q}}{2} \right)^p,
\end{equation}
 with
\[
\pi_{p,q} := \inf_{u \in C_0^{\infty}((0,1))} \left\{ \|u'\|_{L^p((0,1))}\, :\, \|u\|_{L^q((0,1))}=1 \right\}.
\]
Moreover, equality in \eqref{eq:upper bound} is never attained within  the class of bounded convex open sets, but the bound is sharp for $q\leq p$, namely
\begin{equation}\label{eq:upper bound-02}
\sup \big\{\mathcal{F}_{p,q}(\Omega): \Omega\subset \mathbb{R}^{N} \text{ bounded convex open set }\big\}
=\left( \frac{\pi_{p,q}}{2} \right)^p.
\end{equation}
The problem of minimizing  $\mathcal F_{p,q}(\Omega)$ in the class of bounded convex sets of $\mathbb R^N$ has been treated in several papers,  with the aim of generalizing the results \eqref{eq:makai01} and \eqref{makaiT} for every $p,q$ satisfying $1\leq q<p$ and $N\geq 2$ (see, e.g., \cite{Bra1, {Bra2}, {BBP}, FGL, BBPconvex, GVB}). The first inequality \eqref{eq:makai01} has been extended to every $1<p<\infty$ and every dimension $N$ in \cite[Corollary 5.1]{Bra2} where it is shown that, for every bounded convex open set $\Omega\subseteq \mathbb R^N$, the following sharp lower bound holds:
\begin{equation}\label{eq:lowerbound}
\lambda_{p}(\Omega) \ge \left( \frac{\pi_{p}}{2N}\right)^p\, \left(\frac{P(\Omega)}{|\Omega|}\right)^p,
\qquad \hbox{i.e.} \;\;\; \mathcal F_{p,p}(\Omega) \geq \left( \frac{\pi_{p}}{2N}\right)^p,
\end{equation}
where $\pi_{p}:=\pi_{p,p}$. On the contrary, the results existing in the literature and aimed at generalizing the second Makai inequality \eqref{makaiT}
are satisfactory only in the planar case. Indeed, we recall that, when $N=2$, 
in \cite[Theorem 1]{FGL} the following optimal upper bound is proved:
\begin{equation}\label{fraga}
T_{p}(\Omega)\le \left(\frac {2^{p'+1}}{(p'+1)(p'+2)} \right)^{p-1} \frac{|\Omega|^{2p-1}}{(P(\Omega))^p},
\end{equation}
i.e.
\[
\mathcal F_{p,1}(\Omega)\geq \left(\frac {(p'+1)(p'+2)} {2^{p'+1}} \right)^{p-1}, 
\]
for every bounded convex open set $\Omega\subseteq \mathbb R^2$.
Later, for the general case $N\geq 2$  and $p=2$, it has been conjectured in \cite{BBPconvex} that the sharp inequality 
\begin{equation}\label{conj}
T_{2}(\Omega)\le \frac{2N^2}{(N+1)(N+2)}\frac{|\Omega|^{3}}{(P(\Omega))^2}
\end{equation}
holds in the class of bounded convex open sets $\Omega\subseteq \mathbb R^N$. This conjecture is based on the fact that in the class of {\sl thin domains}
\[
\Omega_\varepsilon=\big\{(x,y)\in A\times \mathbb{R}\ :\ 0<y<\varepsilon h(x)\big\},
\]
where $\varepsilon$ is a small positive parameter, $A$ is a bounded convex open set of $\mathbb R^{N-1}$, and $h:A\to (0,+\infty)$ is a given Lipschitz continuous function, the inequality \eqref{conj} holds and equality  is reached asymptotically (see \cite[Theorem 4.2]{BBPconvex}).

We explicitly note that, in the planar case, the constants appearing in \eqref{conj}  and in \eqref{fraga} when $p=2$ coincide with the constant appearing in the Makai inequality \eqref{makaiT}.
\subsection{Main result}
After this preamble, we can state the main result of this paper. 
 \begin{thm}
\label{cor:upperb}
\sl Let $N\geq 2$. Let $1 \le q < p<\infty$ and let $\Omega \subset \mathbb{R}^N$ be a bounded convex open set. Then the following lower bound holds
\begin{equation}\label{eq:lower bound per}
\lambda_{p,q}(\Omega) \ge K_{p,q,N} \left(\frac{P(\Omega)}{|\Omega|^{1-\frac{1}{p}+\frac{1}{q}}}\right)^p,
\end{equation}
where
\[
K_{p,q,N}= \left( \frac{\pi_{p,q}}{2N}\right)^p\,  \prod_{j=2}^N \left(  \frac{pq}{j(p-q)}+1 \right)^{\frac{p}{q}-1}.
\]
Moreover, the equality in \eqref{eq:lower bound per} is asymptotically attained, as $\varepsilon\to 0$, by the sequence of thin domains 
{\begin{equation}\label{thinintro}
\Omega_\varepsilon=\Big\{(x,y)\in \mathbb R^{N-1}\times \mathbb{R} :\ |x|<1,\  0<y<\varepsilon (1-|x|) \Big\}.
\end{equation}}
\end{thm}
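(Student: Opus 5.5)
The plan is to combine a Hardy-type lower bound for $\lambda_{p,q}$ in terms of the distance function $d_\Omega(x)=\operatorname{dist}(x,\partial\Omega)$ with a sharp geometric upper bound for $\|d_\Omega\|_{L^{r}(\Omega)}$, where $r=\frac{pq}{p-q}$. The abstract announces exactly such a bound.

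\textbf{Step 1 (Hardy-type bound).} For convex $\Omega$ one has the one-dimensional reduction from \cite{Bra1}:
\[
\lambda_{p,q}(\Omega)\ \ge\ \frac{\pi_{p,q}^p}{2^p}\,\Big(\int_\Omega d_\Omega^{\frac{pq}{p-q}}\,dx\Big)^{-\frac{p-q}{q}}\Big/\ \text{(suitable normalisation)}.
\]
I would try to prove it in the form
\[
\lambda_{p,q}(\Omega)\ge \Big(\frac{\pi_{p,q}}{2}\Big)^p \Big(\frac{1}{\|d_\Omega\|_{L^{r}}}\Big)^{p}\cdot(\text{power of } |\Omega|),
\]
homogeneous of the correct degree. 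A cleaner route is the layer-cake method, which gives a bound of the type used in \cite{Bra2} for $q=p$. Take a minimiser $u$ and use the coarea formula along the level sets of $d_\Omega$. The parallel sets $\Omega_t=\{d_\Omega>t\}$ are convex, and $t\mapsto P(\Omega_t)$ is nonincreasing. This reduces $\lambda_{p,q}(\Omega)$ to a weighted one-dimensional problem on $(0,r_\Omega)$ with weight $P(\Omega_t)$. The weighted problem is then compared with the unweighted $\pi_{p,q}$ by a change of variable $s=|\Omega\setminus\Omega_t|$, as in the proof of \eqref{eq:lowerbound}.

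\textbf{Step 2 (geometric estimate).} The goal is to prove
\[
\int_\Omega d_\Omega^{\,r}\,dx\ \le\ C_{r,N}\,\frac{|\Omega|^{r+1}}{P(\Omega)^{r}},\qquad C_{r,N}=\frac{N^{r}\,r!\,N!}{(r+N)!}\ \text{(Gamma functions for noninteger } r),
\]
with equality asymptotically for the cone-like thin domains \eqref{thinintro}. To do this, write $\int_\Omega d_\Omega^r=r\int_0^{r_\Omega}t^{r-1}|\Omega_t|\,dt$. The function $f(t)=|\Omega_t|$ satisfies $f'(t)=-P(\Omega_t)$, and by Brunn--Minkowski $f^{1/N}$ is concave. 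It also satisfies $f(0)=|\Omega|$ and $-f'(0)=P(\Omega)$. Among such $f$, the integral $\int t^{r-1}f$ is maximised by $f(t)=|\Omega|(1-t/T)^N$ with $T=N|\Omega|/P(\Omega)$; this is a concavity/comparison argument. Computing the Beta integral gives $\int_0^T r t^{r-1}(1-t/T)^N dt=T^r\,\frac{\Gamma(r+1)\Gamma(N+1)}{\Gamma(r+N+1)}$, which yields the stated $C_{r,N}=N^r\prod_{j=1}^N\frac{j}{r+j}$.

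\textbf{Step 3 (assembly and sharpness).} Insert Step 2 into Step 1 with exponent $(p-q)/q$. The factor $\prod_{j}(r/j+1)^{(p-q)/q}$ appears, with $r/j=\frac{pq}{j(p-q)}$, together with $N^{-p}$ from $T^{p}$. The $j=1$ factor should cancel against the normalisation of $\pi_{p,q}$ in the one-dimensional step, which explains why the product in $K_{p,q,N}$ starts at $j=2$. Checking that exact bookkeeping is part of Step 1. For sharpness, on the thin domains $\Omega_\varepsilon$ I would use the known asymptotics $\varepsilon^{-\cdots}\lambda_{p,q}(\Omega_\varepsilon)\to$ a one-dimensional weighted problem in $x$ with profile $h(x)=1-|x|$, as in \cite[Theorem 4.2]{BBPconvex}. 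For these domains $|\Omega_t|$ is asymptotically of the form $(1-t/T)^N$, so every inequality becomes an equality.

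\textbf{Main obstacle.} The hardest step is Step 1. A crude Hardy inequality would lose constants, so it has to be done in a sharp form in which the profile of the level sets is exactly what the extremal in Step 2 saturates. I would model it on the proof of \cite[Corollary 5.1]{Bra2}, replacing $L^\infty$ of $d_\Omega$ by $L^r$.
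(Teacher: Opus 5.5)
Your architecture matches the paper's. You combine a sharp lower bound for $\lambda_{p,q}(\Omega)$ in terms of $\int_\Omega d_\Omega^\beta\,dx$, with $\beta=\frac{pq}{p-q}$, and a sharp upper bound for $\int_\Omega d_\Omega^\beta\,dx$ in terms of $|\Omega|$ and $P(\Omega)$.

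\textbf{Step 2 is correct, and shorter than the paper's route to \eqref{sharpdis}.} Set $g(t)=|\Omega(t)|^{1/N}$. It is concave, and $g'(0^+)=-\frac1N|\Omega|^{\frac1N-1}P(\Omega)$ because $P(\Omega(t))\to P(\Omega)$ as $t\to0^+$. So $g$ lies below its tangent line at $0$, which gives $|\Omega(t)|\le|\Omega|(1-t/T)_+^N$ with $T=N|\Omega|/P(\Omega)$. The layer-cake formula then yields \eqref{sharpdis} for every $\beta>0$, with equality iff $r_\Omega=T$, i.e.\ iff \eqref{optimal} holds. The paper instead combines Borell's inequality (Theorem \ref{theo.borell}) with Lemmas \ref{lem:alpha-upper} and \ref{lem:step1}, which is why it needs $\beta\ge1$.

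\textbf{The genuine gap is Step 1.} This step carries the sharp constant, and you neither state it precisely nor prove it. What is needed is exactly
\[
\lambda_{p,q}(\Omega)\ge\Big(\frac{\pi_{p,q}}{2}\Big)^p(\beta+1)^{-\frac p\beta}\Big(\int_\Omega d_\Omega^{\beta}\,dx\Big)^{-\frac p\beta}.
\]
This is already scale invariant, so no power of $|\Omega|$ enters. Its factor $(\beta+1)^{-p/\beta}$ is what cancels the $j=1$ factor of $\binom{N+\beta}{N}=\prod_{j=1}^N(1+\beta/j)$. It is \eqref{eq:prizag}, which the paper quotes from \cite{PZ}; what \cite{Bra1} provides is the upper bound \eqref{eq:upper bound}.

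The route you sketch cannot give this inequality. The one-dimensional problem on $(0,r_\Omega)$ with weight $P(\Omega(t))$ is just the Rayleigh quotient for $\lambda_{p,q}(\Omega)$ restricted to test functions $\phi(d_\Omega)$. Indeed, by coarea, $\int_\Omega|\nabla\phi(d_\Omega)|^p\,dx=\int_0^{r_\Omega}|\phi'|^pP(\Omega(t))\,dt$, and likewise for the $L^q$ norm. Its infimum is therefore $\ge\lambda_{p,q}(\Omega)$, in general strictly, and the substitution $s=|\Omega\setminus\Omega(t)|$ does not reverse this. That is the web-function mechanism for \emph{upper} bounds.

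If you want to prove Step 1 rather than quote it, slice along segments orthogonal to $\partial\Omega$ instead of along level sets. For a convex polytope, the points whose nearest facet is $F_i$ form a union of segments $\{y-s\nu_i:\,0<s<\ell(y)\}$, $y\in F_i$, on which $d_\Omega=s$. On each segment, even reflection about $s=\ell$ gives, for $w(0)=0$,
\[
\int_0^{\ell}|w'|^p\,ds\ge\Big(\frac{\pi_{p,q}}{2}\Big)^p\Big((\beta+1)\int_0^{\ell}s^\beta\,ds\Big)^{-\frac p\beta}\Big(\int_0^{\ell}|w|^q\,ds\Big)^{\frac pq}.
\]
H\"older in $y$ with exponents $\frac pq$ and $\frac{p}{p-q}$ then sums the terms $\int_0^{\ell(y)}s^\beta\,ds$ to $\int_\Omega d_\Omega^\beta\,dx$. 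General convex sets follow by polytope approximation.

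\textbf{Sharpness.} You need an upper bound on $\lambda_{p,q}(\Omega_\varepsilon)$ for all $1\le q<p$, not only the case $p=2$, $q=1$ of \cite{BBPconvex}. The paper obtains it from the test functions of Proposition \ref{stimethin}. These give $\limsup_{\varepsilon\to0}\mathcal F_{p,q}(\Omega_\varepsilon)\le K_{p,q,N}$ directly, so you do not need to check separately that each intermediate inequality is asymptotically tight.
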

In particular we have that 
\[
\inf \big\{\mathcal{F}_{p,q}(\Omega): \Omega\subset \mathbb{R}^{N} \text{ bounded convex open set }\big\}=
K_{p,q,N},
\]
which is the counterpart of \eqref{eq:upper bound-02}. We note that, taking the limit in \eqref{eq:lower bound per}  as\footnote{we use here that $q\mapsto \lambda_{p,q}(\Omega)$ is continuous on every compact interval when $\Omega\subset\mathbb{R}^N$ is a bounded open set, see \cite[Theorem 1]{AFI} and \cite[Theorem 7]{Er}.}
  $q\nearrow p$, we find the sharp lower bound \eqref{eq:lowerbound} for $\lambda_{p}(\Omega)$. 
Moreover,  when $q=1$, as a corollary of Theorem \ref{cor:upperb},  we get a sharp upper bound for the $p$-torsional rigidity. This result both extends the inequality \eqref{fraga} to the case $N\geq 3$ and, when applied with $p=2$, shows the conjecture \eqref{conj}.
\begin{cor}\label{maincoro}
Let $N\geq 2$. Let $1 < p<\infty$ and let $\Omega \subset \mathbb{R}^N$ be a bounded convex open set. Then
\begin{equation}\label{eq:lower bound tor}
T_{p}(\Omega) \le N^p \binom{N+p'}{N}^{1-p} \frac{|\Omega|^{2p-1}}{(P(\Omega))^p}
\end{equation}
with $p'=\frac{p}{p-1}$. Moreover, the equality in \eqref{eq:lower bound tor} is  asymptotically attained, as $\varepsilon\to 0$, by the sequence of thin domains $\Omega_\varepsilon$ given by \eqref{thinintro}.
\end{cor}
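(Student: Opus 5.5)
The plan is to specialise Theorem \ref{cor:upperb} to $q=1$. Since $T_p(\Omega)=\lambda_{p,1}(\Omega)^{-1}$, inequality \eqref{eq:lower bound per} with $q=1$ gives
\[
T_p(\Omega)\le K_{p,1,N}^{-1}\,\frac{|\Omega|^{(1-\frac1p+1)p}}{P(\Omega)^p}=K_{p,1,N}^{-1}\,\frac{|\Omega|^{2p-1}}{P(\Omega)^p}.
\]
This already has the right volume exponent, so it only remains to identify $K_{p,1,N}^{-1}$ with $N^p\binom{N+p'}{N}^{1-p}$. The equality statement then follows at once, because the thin domains \eqref{thinintro} realise equality asymptotically in \eqref{eq:lower bound per} for every $q<p$, in particular for $q=1$.

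For the product, note that $\frac{p}{p-1}=p'$ and $\frac{p}{q}-1=p-1$ when $q=1$. Hence
\[
\prod_{j=2}^N\Big(\frac{p'}{j}+1\Big)^{p-1}=\Big(\prod_{j=2}^N\frac{j+p'}{j}\Big)^{p-1}=\Big(\frac{(N+p')(N+p'-1)\cdots(2+p')}{N!}\Big)^{p-1}.
\]
Writing the generalised binomial coefficient as
\[
\binom{N+p'}{N}=\frac{(N+p')\cdots(1+p')}{N!},
\]
this product equals
\[
\Big(\binom{N+p'}{N}\frac{1}{1+p'}\Big)^{p-1}.
\]

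For the one-dimensional constant $\pi_{p,1}$, I would use the explicit torsion function on $(0,1)$. For $q=1$ the optimiser solves $-(|u'|^{p-2}u')'=1$ with zero boundary values. It is symmetric around $1/2$ with $|u'(x)|=|x-\frac12|^{p'-1}$, and $\pi_{p,1}^p=\lambda_{p,1}((0,1))=\big(\int_0^1 u\big)^{1-p}$. A direct computation gives
\[
\int_0^1u=\frac{2}{p'(p'+1)}\Big(\frac12\Big)^{p'+1}=\frac{2^{-p'}}{p'(p'+1)}.
\]
Alternatively, this value can be cross-checked against the planar constant in \eqref{fraga}.

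Combining the pieces,
\[
K_{p,1,N}=(2N)^{-p}\,\big(2^{-p'}/(p'(p'+1))\big)^{1-p}\,\binom{N+p'}{N}^{p-1}(1+p')^{1-p}.
\]
Using $p'(p-1)=p$, the factor $2^{p'(p-1)}=2^p$ cancels $2^{-p}$, and $(p'+1)^{p-1}$ cancels $(1+p')^{1-p}$. What is left is $N^{-p}\,p'^{\,p-1}\binom{N+p'}{N}^{p-1}$.

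This leftover factor $p'^{\,p-1}$ shows that the real obstacle is fixing the exact normalisation of $\pi_{p,1}$, since the constraint $\|u\|_{L^1}=1$ is applied to $u$ and not to a power of it. I would therefore recompute $\lambda_{p,1}((0,1))$ carefully by testing with $u$. Multiplying the equation by $u$ gives $\int|u'|^p=\int u$, so
\[
\lambda_{p,1}=\frac{\int|u'|^p}{(\int u)^p}=\Big(\int_0^1u\Big)^{1-p}.
\]
Then $\int u$ should be recomputed with the correct profile $u(x)=\frac{1}{p'}\big(2^{-p'}-|x-\frac12|^{p'}\big)$. The expectation is that the corrected value absorbs the $p'$ factor and yields exactly $K_{p,1,N}^{-1}=N^p\binom{N+p'}{N}^{1-p}$.

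As a final check, I would confirm the constant against the case $N=2$, $p=2$, where it should give $\tfrac23$ as in \eqref{makaiT}.
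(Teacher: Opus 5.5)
Your route is the same as the paper's. You specialise Theorem \ref{cor:upperb} to $q=1$, rewrite $\prod_{j=2}^N(1+p'/j)^{p-1}=\bigl(\binom{N+p'}{N}/(1+p')\bigr)^{p-1}$, and insert the value of $\pi_{p,1}$; the paper simply quotes $\pi_{p,1}^p=1/T_p(0,1)=2^p(p'+1)^{p-1}$ from \cite[Remark 2.4]{Bra1}. The sharpness statement transfers directly from the theorem, as you say.

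The one real defect is that the constant is left unresolved after an arithmetic slip. Your diagnosis of that slip is also off. The identity $\lambda_{p,1}((0,1))=\bigl(\int_0^1u\,dx\bigr)^{1-p}$ for the torsion function is correct, and no normalisation issue needs fixing. The error lies only in evaluating the integral. Integrating your own $|u'|=|x-\frac12|^{p'-1}$ gives $u(x)=\frac1{p'}\bigl(2^{-p'}-|x-\frac12|^{p'}\bigr)$, and
\[
\int_0^1u\,dx=\frac{1}{p'}\Bigl(2^{-p'}-\frac{2^{-p'}}{p'+1}\Bigr)=\frac{2^{-p'}}{p'+1},
\]
not $\frac{2^{-p'}}{p'(p'+1)}$. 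Hence $\pi_{p,1}^p=2^{p'(p-1)}(p'+1)^{p-1}=2^p(p'+1)^{p-1}$, and the spurious factor $p'^{\,p-1}$ disappears. You then get $K_{p,1,N}=N^{-p}\binom{N+p'}{N}^{p-1}$, which is exactly the claimed constant. Your cross-checks against \eqref{fraga} and against $\frac23$ for $N=p=2$ then go through. With this one-line correction in place of the stated ``expectation'', your proof is complete.
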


The proof of Theorem \ref{cor:upperb} follows the strategy introduced by Makai to derive \eqref{makaiT}. His argument consists of relating both terms of  \eqref{makaiT} with the $L^2$-norm of the distance function from the boundary of $\Omega$, i.e.
\[
d_{\Omega}(x):=\inf\big\{|x-y|:y\in \partial\Omega\big\}.
\]
More precisely, he showed that,  for every bounded convex open set $\Omega\subseteq \mathbb R^2$, the following optimal estimates hold:
\begin{equation}\label{Torsione-dist}
T_2(\Omega)\leq \int_{\Omega} d^2_{\Omega}(x)\,dx\leq  \frac{2}{3}\,\frac{|\Omega|^3}{(P(\Omega))^2}.
\end{equation}

Now, a result by Prinari and Zagati (see \cite[Theorem 1.1]{PZ}) generalized the first inequality of  \eqref{Torsione-dist} providing a sharp lower bound for $\lambda_{p,q}(\Omega)$ in terms of a suitable power of $\|d_{\Omega}\|_{L^{\beta}(\Omega)}$, where $\beta=\frac{pq}{p-q}$. 
More precisely, they proved that, on every bounded convex open set $\Omega$ and  for every  $1 \le  q < p < \infty$, the following optimal Makai type inequality holds 		
\begin{equation}
\label{eq:prizag}
\lambda_{p,q}(\Omega) \ge \left( \frac{\pi_{p,q}}{2} \right)^p \left(\displaystyle  \frac{p-q}{pq+p-q} \right)^{\frac{p-q}{q}} {\left(\displaystyle \int_{\Omega} d_{\Omega}^{\frac{p\,q}{p-q}}\, dx \right)^{\frac{q-p}{q}}}.
\end{equation}
Inspired by the second inequality in \eqref{Torsione-dist},  we aim to   obtain some  sharp estimates, within the class of bounded convex open sets,  for the quantity 
\[
D_{\beta}(\Omega):=\int_{\Omega} d^{\beta}_{\Omega}(x)\,dx
\]
in terms of   two quantities chosen among the  perimeter  $P(\Omega)$,  the volume  $|\Omega|$ and the \emph{inradius} $r_{\Omega}$ of the set $\Omega$, where $r_{\Omega}$ is defined as the radius of the largest ball contained in $\Omega$, i.e.
\begin{equation}
\label{inradius}
r_\Omega:=\sup \big\{r>0\, :\, \mbox{ exists }x_0\in \Omega \mbox{ such that } B_r(x_0)\subset\Omega\big\}.
\end{equation}
In particular, for $\beta\geq 1$, in Theorem \ref{reverseholder} we 
prove the following sharp upper bound 
\begin{equation}\label{sharpdis}
 D_{\beta}(\Omega)
\le N^\beta \binom{N+\beta}{N}^{-1} \frac{|\Omega|^{\beta+1}}{\left(P(\Omega)\right)^\beta},
\end{equation}
which is the extension of that one shown by Makai in  \eqref{Torsione-dist} in the particular case $\beta=N=2$, and  is the key estimate of our paper. Indeed, combined with  \eqref{eq:prizag}, allows us to prove \eqref{eq:lower bound per} and \eqref{eq:lower bound tor}. 
\subsection{Plan of the paper} The paper is organized as follows.
We start, in Section \ref{intera}, by proving a sharp two-sided estimate for $D_{\beta}(\Omega)$ in terms of $r_{\Omega}^{\beta+1}P(\Omega)$ for every $\beta\geq 0$, see Theorem \ref{thm_bound}.  In Proposition \ref{controinradius-volume}, we show an optimal lower bound for $D_{\beta}(\Omega)$ in terms of $r^{\beta}_{\Omega}|\Omega|$, complementing the sharp upper bound proved  in \cite[Remark 3.1]{PZ}. Then, in Theorem \ref{reverseholder}, we prove the inequality \eqref{sharpdis} and  we show that it holds as an equality  on the bounded convex open sets which are  homothetic to their form body  (see Definition~\ref{def:homothetic}, Remark \ref{def:homothetic} and Theorem \ref{reverseholder}). The proof exploits a lower bound, shown by  Larson in \cite{lar}, for the perimeter of the inner parallel sets of a convex open set $\Omega$ by means of $P(\Omega)$ and $r_{\Omega}$ (see Formula \eqref{larson}). Moreover,  in the same theorem, we also provide a  sharp lower bound for $D_{\beta}(\Omega)$ in terms of  $P(\Omega)$ and $|\Omega|$.

\noindent 
As a joint application of \eqref{eq:prizag} and \eqref{sharpdis}, with $\beta=pq(p-q)^{-1}$, in Section \ref{proof} we provide the proof of the main results of this paper.  In  Section \ref{AppB},    we establish further geometric estimates for $\lambda_{p,q}(\Omega)$.  First of all, as a byproduct of  \eqref{eq:prizag},  in Corollary \ref{cor:inradius-perimeter} we give a sharp lower bound for $\lambda_{p,q}(\Omega)$ by means of $r_{\Omega}$ and $P(\Omega)$. Moreover,  we derive optimal upper bounds for  $\lambda_{p,q}(\Omega)$ by means of $|\Omega|$ and $r_{\Omega}$ (see Theorem \ref{teo:upperbound}) and  in terms of $P(\Omega)$ and $r_{\Omega}$  (see Corollary  \ref{cor:upper-inradius-perimeter}). The first  extends to every $1<p<\infty$  a result shown by Brasco and Mazzoleni in the particular case $p=2$ (see \cite[Theorem 1.2]{BM}). Finally,  Appendix \ref{AppA} is devoted to show  the ill-posedness of the optimization problems for $\mathcal F_{p,q}$ in the class of simply connected open sets.
\subsection{Final remarks}
As a concluding remark, we note that, by joining the new geometric estimates proved in this paper  with  some known results,    we can give a complete picture of  sharp inequalities satisfied by  $\lambda_{p,q}(\Omega)$ by means of suitable powers, depending on $p$ and $q$, of two among  the quantities   $P(\Omega)$, $|\Omega|$ and $r_{\Omega}$,  when $\Omega $ is a bounded convex open set and  $1\le q<p<\infty$ or $1<q=p<\infty$. 
\begin{list}{-}{\leftmargin=1em \itemindent=-0.3em}
\item First, thanks to  the lower bound proved in  Theorem  \ref{cor:upperb} and to  the upper bound in  \cite[Main Theorem]{Bra1},  we have the following optimal bounds for  $\lambda_{p,q}(\Omega)$  in terms  of the  perimeter and the  volume of $\Omega$
\begin{equation*}\label{per-vol}
K_{p,q,N}\leq \lambda_{p,q}(\Omega)   \left(\frac{|\Omega|^{1-\frac{1}{p}+\frac{1}{q}}}{P(\Omega)}\right)^p< \left( \frac{\pi_{p,q}}{2} \right)^p,
\end{equation*}
with the lower bound and the upper bound   asymptotically attained  by  the sequence of thin domains  $\Omega_{\varepsilon}$, defined by \eqref{thinintro},  and by the sequence  of  slab-type domains  $\Omega_L$ (see \eqref{slab}), respectively.

\item Then, by  combining Theorem \ref{teo:upperbound} with the lower bound given in  \cite[Theorem 5.7]{BPZ1}, we obtain  the following two-sided  estimate for  $\lambda_{p,q}(\Omega)$ by means of  the volume  and the inradius of $\Omega$
\begin{equation}\label{inr-vol}
 \left( \frac{\pi_{p,q}}{2} \right)^p\leq \lambda_{p,q}(\Omega)\,|\Omega|^{\frac{p-q}{q}} r_\Omega^p\le \omega_N^\frac{p-q}{q}\,\lambda_{p,q}(B_1),
\end{equation}
where  $B_1$ is the  unit open ball and  $\omega_N=|B_1|$; finally, thanks to Corollaries \ref{cor:inradius-perimeter} and \ref{cor:upper-inradius-perimeter},  we obtain the following sharp estimates for  $\lambda_{p,q}(\Omega)$  in terms of  the perimeter and the inradius of $\Omega$
\begin{equation}\label{per-inr}\left(\frac{\pi_{p,q}}{2}\right)^p
\left(\frac{1}{pq+p-q}\right)^{\frac{p-q}{q}}\leq 
 \lambda_{p,q}(\Omega)  r_{\Omega}^{\frac{pq+p-q}{q}} P(\Omega)^{\frac{p-q}{q}}\le 
 (N\omega_N)^\frac{p-q}{q}\,\lambda_{p,q}(B_1).
 \end{equation}
For both estimates  \eqref{inr-vol} and \eqref{per-inr}, the equality in the upper bound is attained  when  $\Omega=B_1$ while the lower bound is asymptotically attained by  the sequence of slab-type domains  $\Omega_L$.
\end{list}

\begin{ack} 
The authors wish to warmly thank  Lorenzo Brasco for some  useful discussions  on the subject. F.P.  thanks Anna Chiara Zagati for some preliminary conversations on this problem.
The authors are both members of the {\it Gruppo Nazionale per l'Analisi Matematica, la Probabilit\`a
e le loro Applicazioni} (GNAMPA) of the Istituto Nazionale di Alta Matematica (INdAM). They  gratefully acknowledge the financial support of the project GNAMPA 2025  ``Ottimizzazione Spettrale, Geometrica e Funzionale" ({\tt CUP E5324001950001}).  Francesca Prinari
thanks the  Dipartimento di Matematica e Fisica at
Università degli Studi della Campania “Luigi Vanvitelli” in Caserta, Italy, for its hospitality. GP is also supported by the Portuguese government through FCT - Fundação para a Ciência e a Tecnologia, I.P., within the project ASSO ({\tt DOI 10.54499/2024.14494.PEX}). 
\par

\end{ack}
\section{Sharp inequalities for the Lebesgue norms of  the distance functions}\label{intera}
\subsection{Inradius--perimeter bounds}
\label{stimeinradius}

This subsection is devoted to generalizing the sharp geometric estimate 
\begin{equation}\label{pervol}
\frac{1}{N} \le \frac{|\Omega|}{r_{\Omega} P(\Omega)} < 1,
\end{equation}
which holds for every bounded convex open set $\Omega\subseteq \mathbb R^N$ with $N\geq 2$ (see, e.g., \cite[Lemma B.1]{Bra1}).
We recall that $r_{\Omega}$, defined by  \eqref{inradius},   coincides with the supremum  of $d_\Omega$ over $\Omega$.
\begin{thm}\label{thm_bound}
For any bounded convex open set $\Omega\subseteq \mathbb R^N$ with $N\geq 2$, the following sharp two-sided inequality holds:
\begin{equation}\label{pervolext}
\frac{1}{N}\binom{N+\beta}{N}^{-1}
\leq \frac{\int_{\Omega} d^{\beta}_{\Omega}\,dx}{r_{\Omega}^{\beta+1}P(\Omega)}
<\frac{1}{\beta+1},
\qquad \forall \beta \geq 0.
\end{equation}
Moreover, the lower bound  in \eqref{pervolext} holds as equality if and only if   $\Omega$ satisfies the following condition:
\begin{equation}\label{optimal}
|\Omega|= \frac{P(\Omega) r_{\Omega}}{N},
\end{equation}
while the upper bound in \eqref{pervolext}   is asymptotically attained, as $L\to \infty$, by the  sequence of slab-type domains given by
\begin{equation}\label{slab}
\Omega_L =\left(-\frac{L}{2},\frac{L}{2}\right)^{N-1}\times (0,1).
\end{equation}
\end{thm}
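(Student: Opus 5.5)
We will use the coarea formula together with the inner parallel sets $\Omega_t=\{x\in\Omega: d_\Omega(x)>t\}$, $0\le t\le r_\Omega$. Since $|\nabla d_\Omega|=1$ a.e., we have
\[
\int_\Omega d_\Omega^\beta\,dx=\int_0^{r_\Omega} t^\beta\, P(\Omega_t)\,dt .
\]
Each $\Omega_t$ is convex and contained in $\Omega$, so $P(\Omega_t)\le P(\Omega)$. Hence
\[
\int_\Omega d_\Omega^\beta\,dx\le P(\Omega)\,\frac{r_\Omega^{\beta+1}}{\beta+1}.
\]
The inequality is strict: for $t$ close to $r_\Omega$, the set $\Omega_t$ has small inradius and is strictly contained in $\Omega$ at positive distance, so $P(\Omega_t)<P(\Omega)$ on a set of $t$ of positive measure. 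For instance, for convex sets, $P(\Omega_t)<P(\Omega)$ whenever $t>0$, since $\Omega_t+B_t\subseteq\Omega$ and Steiner's formula gives $P(\Omega_t+B_t)>P(\Omega_t)$.

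For sharpness we use $\Omega_L$. There $d_\Omega$ coincides with $\min(y,1-y)$ outside a region of measure $O(L^{N-2})$, and $r=1/2$. We get $\int d^\beta\approx L^{N-1}\cdot 2\int_0^{1/2}y^\beta\,dy=L^{N-1}\,2^{-\beta}/(\beta+1)$, while $P\approx 2L^{N-1}$. The ratio therefore tends to $1/(\beta+1)$.

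For the lower bound we need a lower bound on $P(\Omega_t)$. The natural tool is the concavity of $t\mapsto |\Omega_t|^{1/N}$, which follows from Brunn–Minkowski since $(1-\lambda)\Omega_s+\lambda\Omega_t\subseteq\Omega_{(1-\lambda)s+\lambda t}$. Alternatively, one can use the estimate $P(\Omega_t)\ge P(\Omega)(1-t/r_\Omega)^{N-1}$, the Larson-type bound cited in the introduction as \eqref{larson}. I would aim for this bound directly: it is equivalent to the statement that $\Omega_t$ contains a homothetic copy of a set with perimeter ratio $(1-t/r)^{N-1}$. Concretely, let $x_0$ be an incenter and consider $x_0+(1-t/r)(\Omega-x_0)$. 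Convexity gives $B_{r}(x_0)\subseteq\Omega$, so every point of $x_0+(1-t/r)(\Omega-x_0)$ lies at distance at least $t$ from $\partial\Omega$. Thus this set is contained in $\Omega_t$, and by monotonicity of perimeter for convex sets,
\[
P(\Omega_t)\ge \Big(1-\frac tr\Big)^{N-1}P(\Omega).
\]
Then
\[
\int_\Omega d^\beta\ge P(\Omega)\int_0^r t^\beta\Big(1-\frac tr\Big)^{N-1}dt = P(\Omega)\,r^{\beta+1}B(\beta+1,N),
\]
and $B(\beta+1,N)=\frac{\Gamma(\beta+1)\Gamma(N)}{\Gamma(\beta+N+1)}=\frac1N\binom{N+\beta}{N}^{-1}$, which is the stated constant.

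The main obstacle is the equality characterization. Equality forces $P(\Omega_t)=(1-t/r)^{N-1}P(\Omega)$ for a.e. $t$. In particular $\Omega_t$ must coincide with the homothetic copy $x_0+(1-t/r)(\Omega-x_0)$ (perimeter is strictly monotone under strict inclusion of convex sets). Integrating in $t$ then gives $|\Omega|=\int_0^r P(\Omega_t)\,dt=P(\Omega)\,r/N$, which is \eqref{optimal}. Conversely, assume \eqref{optimal}. Applying the chain of inequalities with $\beta=0$ gives $|\Omega|=\int_0^r P(\Omega_t)\,dt\ge P(\Omega)\,r/N$, and equality forces $P(\Omega_t)=(1-t/r)^{N-1}P(\Omega)$ for a.e. $t$ (by continuity, for all $t$). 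Plugging this into the coarea formula for general $\beta$ yields equality in the lower bound. This converse is clean; the delicate points are the strict-inequality justifications.
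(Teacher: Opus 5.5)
Your proof is correct. For the lower bound and the equality case you follow the paper's route: the coarea formula, the bound $L(t)\ge P(\Omega)(1-t/r_\Omega)^{N-1}$, the Beta-function identity, and the $\beta=0$ comparison to pass between equality for general $\beta$ and \eqref{optimal}.

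\textbf{Lower bound.} The one difference is that the paper cites Larson \cite{lar} for \eqref{larson}, while you derive it yourself. You use the inclusion of the homothetic copy $x_0+(1-t/r_\Omega)(\Omega-x_0)$ into $\{d_\Omega\ge t\}$, which follows from $B_{r_\Omega}(x_0)\subseteq\Omega$ and convexity. Concavity of $d_\Omega$ even gives $d_\Omega>t$, so the copy lies in $\Omega(t)$ itself. You then apply monotonicity of perimeter under inclusion of convex sets. This argument is valid and self-contained. The paper needs Larson's theorem only for the form-body characterization in Remark \ref{def:homothetic}, which is not part of this statement.

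\textbf{Equality case.} Your aside that $\Omega(t)$ must coincide with the homothetic copy is unnecessary. Integrating the a.e.\ identity $L(t)=P(\Omega)(1-t/r_\Omega)^{N-1}$ over $(0,r_\Omega)$ already yields \eqref{optimal}.

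\textbf{Upper bound.} Here your route is genuinely different. You bound $L(t)\le P(\Omega)$ directly inside the coarea integral. You get strictness for every $t\in(0,r_\Omega)$ from $\Omega(t)+B_t\subseteq\Omega$ and Steiner's formula, which gives $P(\Omega(t))<P(\Omega(t)+B_t)\le P(\Omega)$. The paper instead chains the estimate $\int_\Omega d_\Omega^\beta\,dx\le |\Omega|\,r_\Omega^\beta/(\beta+1)$ from \cite{PZ} with the strict inequality $|\Omega|<r_\Omega P(\Omega)$ of \eqref{pervol}.

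Your argument is more elementary: it uses only standard facts on convex bodies and no external inequalities. The paper's argument passes through the sharper volume-based bound \eqref{prizag2}, which it also uses as the reference point for Proposition \ref{controinradius-volume}. Your slab computation for sharpness agrees with the paper's.
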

Here, for $\beta\ge 0$, we understand
\[
\binom{N+\beta}{N}:=\frac{\Gamma(N+\beta+1)}{\Gamma(\beta+1)\Gamma(N+1)},
\]
where $\Gamma$ is the Gamma function.  We explicitly note that, for $\beta=0$, \eqref{pervolext} yields \eqref{pervol}.
\medskip

The proof of Theorem \ref{thm_bound} will  follow by combining Propositions \ref{lower-estim} and \ref{upper-estim}. We start by  recalling some preliminary facts.
For every $t\ge 0$, we denote by $\Omega(t)$ the {\it interior parallel set} at distance $t$ from $\partial\Omega$, i.e.
\begin{equation}\label{parallelset}
\Omega(t):=\bigl\{x\in\Omega:\ d_{\Omega}(x)>t\bigr\}.
\end{equation}
When $\Omega$ is a bounded convex open set, it is well known that $d_{\Omega}$ is concave on $\Omega$ (see \cite{AK}) and $\Omega(t)$ is convex as well.
We set $A(t):=|\Omega(t)|$ and
\[
L(t):=P(\Omega(t)).
\]
Then $A$ is a.e.\ differentiable on $(0,r_\Omega)$ and $A'(t)=-L(t)$ for a.e.\ $t\in(0,r_\Omega)$.
Moreover, the Brunn--Minkowski theorem (see \cite[Theorem 7.4.5]{S}) implies that the function $t\mapsto L(t)^{\frac{1}{N-1}}$ is concave on $[0,r_{\Omega}]$.
Finally, by \cite[Lemma 2.2.2]{BB}, $L$ is monotonically decreasing on $[0,r_{\Omega}]$.

\medskip

We start by proving the lower bound in Theorem \ref{thm_bound}.

\begin{prop}\label{lower-estim}
Let $\beta\geq 0$. Then, for every bounded convex open set $\Omega\subseteq \mathbb R^N$,  the following sharp inequality  holds 
\begin{equation}\label{lowerdistanzabeta}
\int_{\Omega} d^{\beta}_{\Omega}\,dx \geq \frac{1}{N}\binom{N+\beta}{N}^{-1}P(\Omega) r_{\Omega}^{\beta+1}.
\end{equation}
Moreover, equality  in \eqref{lowerdistanzabeta} is attained if and only if   $\Omega$ satisfies  \eqref{optimal}.

\end{prop}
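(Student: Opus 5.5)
The plan is to use the layer-cake/coarea representation together with the concavity of $L^{\frac{1}{N-1}}$. Since $|\nabla d_\Omega|=1$ a.e., the coarea formula gives
\[
\int_\Omega d_\Omega^\beta\,dx=\int_0^{r_\Omega} t^\beta L(t)\,dt .
\]
For $\beta=0$ this also yields $|\Omega|=\int_0^{r_\Omega}L(t)\,dt$. The lower bound for $L(t)$ comes from the concavity of $t\mapsto L(t)^{\frac{1}{N-1}}$ on $[0,r_\Omega]$ together with $L\ge 0$. A concave nonnegative function lies above the chord joining $(0,L(0)^{\frac1{N-1}})$ and $(r_\Omega,L(r_\Omega)^{\frac1{N-1}})$, and the latter is at least $0$. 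Hence
\[
L(t)\ge P(\Omega)\Bigl(1-\frac{t}{r_\Omega}\Bigr)^{N-1},\qquad t\in[0,r_\Omega],
\]
using $L(0)=P(\Omega)$.

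Plugging this in and substituting $t=r_\Omega s$ gives
\[
\int_\Omega d_\Omega^\beta\,dx\ge P(\Omega)\,r_\Omega^{\beta+1}\int_0^1 s^\beta(1-s)^{N-1}\,ds=P(\Omega)\,r_\Omega^{\beta+1}B(\beta+1,N).
\]
Here $B(\beta+1,N)=\frac{\Gamma(\beta+1)\Gamma(N)}{\Gamma(N+\beta+1)}=\frac1N\binom{N+\beta}{N}^{-1}$, which is exactly \eqref{lowerdistanzabeta}. For sharpness, a ball (or any set with \eqref{optimal}) will give equality, as the characterization below shows.

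The equality case is the delicate step. Suppose first that equality holds. Since $t^\beta>0$ on $(0,r_\Omega)$ and $L$ is continuous there (concave power), we get $L(t)=P(\Omega)(1-t/r_\Omega)^{N-1}$ on $[0,r_\Omega]$. Integrating this identity with $\beta=0$ gives $|\Omega|=P(\Omega)r_\Omega B(1,N)=P(\Omega)r_\Omega/N$, which is \eqref{optimal}.

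Conversely, assume \eqref{optimal} holds. Then the $\beta=0$ case of the inequality is an equality, so $\int_0^{r_\Omega}\bigl[L(t)-P(\Omega)(1-t/r_\Omega)^{N-1}\bigr]dt=0$. The integrand is nonnegative and continuous on $(0,r_\Omega)$, so it vanishes identically. Therefore $L(t)=P(\Omega)(1-t/r_\Omega)^{N-1}$, and equality follows for every $\beta\ge0$.

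The main point to verify is the chord bound. It needs continuity of $L$ up to $t=0$, with $L(0)=P(\Omega)$, in order to anchor the concavity inequality; this follows from the monotonicity of $L$ and the convergence of the perimeters of the convex sets $\Omega(t)$ to $P(\Omega)$ as $t\to0^+$. At $t=r_\Omega$ we only use $L(r_\Omega)\ge0$, so the degeneracy of $\Omega(r_\Omega)$ causes no difficulty.
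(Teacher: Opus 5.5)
Your proof is correct. Apart from one step it coincides with the paper's: the coarea identity \eqref{merco}, the Beta-function computation, and the equality discussion are exactly what the paper does. The equality discussion uses the $\beta=0$ case to pass between \eqref{optimal} and the identity $L(t)=P(\Omega)(1-t/r_\Omega)^{N-1}$.

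The difference is how you obtain the pointwise bound $L(t)\ge P(\Omega)(1-t/r_\Omega)^{N-1}$. The paper quotes it as Larson's theorem \cite[Theorem 1.2]{lar}. You instead derive it in one line from the Brunn--Minkowski concavity of $t\mapsto L(t)^{\frac1{N-1}}$ on $[0,r_\Omega]$, by comparing with the chord and dropping the nonnegative endpoint term $L(r_\Omega)^{\frac1{N-1}}$. The paper itself records this concavity and uses it in Lemma \ref{lem:alpha-upper}.

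Your route makes the proposition self-contained modulo Brunn--Minkowski. Your explicit use of the continuity of $L$ on $(0,r_\Omega)$ also makes the passage from ``zero integral'' to ``identity for every $t$'' slightly more careful than the paper's. What citing Larson buys in addition is his characterization of equality in \eqref{larson}, namely homothety to the form body. The paper needs this for Remark \ref{def:homothetic}, but not for the proposition. Your chord argument only shows that equality forces $L^{\frac1{N-1}}$ to be affine and to vanish at $r_\Omega$, without giving a geometric description of the extremal sets.

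Finally, your worry about continuity of $L$ at $t=0$ is unnecessary. Since $\Omega(0)=\Omega$, you have $L(0)=P(\Omega)$ by definition, and the concavity already holds on the closed interval $[0,r_\Omega]$.
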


\begin{proof}  Let $\Omega\subseteq \mathbb R^N$ be a  bounded convex open set.
Thanks to \cite[Theorem 1.2]{lar}, for every inner parallel set $\Omega(t)$ with $t\geq 0$, it holds
\begin{equation}\label{larson}
L(t) \geq P(\Omega)\left(1-\frac{t}{r_{\Omega}}\right)^{N-1}_+.
\end{equation}
Since $d_\Omega$ is $1$-Lipschitz and $|\nabla d_\Omega|=1$ a.e.\ in $\Omega$, the coarea formula yields
\begin{equation}\label{merco}
\int_{\Omega} d^{\beta}_{\Omega}\,dx=\int_0^{r_\Omega} t^{\beta}L(t)\,dt.
\end{equation}
Moreover,
\begin{equation}\begin{split}\label{lun2}
\int_0^{r_{\Omega}} t^{\beta}\left(1-\frac{t}{r_{\Omega}}\right)^{N-1}\,dt &=  r_{\Omega}^{\beta+1}\int_0^{1} t^{\beta}(1-t)^{N-1}\,dt=  r_{\Omega}^{\beta+1} B(\beta+1,N),
\end{split}
\end{equation}
where $B$ is the Beta function. 
Since
\[
B(\beta+1,N)=\frac{\Gamma(\beta+1)\Gamma(N)}{\Gamma(\beta+N+1)}
=\frac{1}{N}\binom{N+\beta}{N}^{-1},
\]
 by combining \eqref{merco} with \eqref{larson}, we obtain the inequality 
\begin{equation}\label{lun3}\int_{\Omega} d^{\beta}_{\Omega}\,dx
\geq P(\Omega)\int_0^{r_{\Omega}} t^{\beta}\left(1-\frac{t}{r_{\Omega}}\right)^{N-1}\,dt= \frac{1}{N}\binom{N+\beta}{N}^{-1}P(\Omega) r_{\Omega}^{\beta+1},
\end{equation}
i.e. \eqref{lowerdistanzabeta} holds.
We now note that  \eqref{optimal} is equivalent to
\begin{equation}\label{larsoncond}
L(t)=P(\Omega)\left(1-\frac{t}{r_{\Omega}}\right)^{N-1}\qquad \forall t\in (0,r_{\Omega}).
\end{equation}
Indeed, by using the coarea formula, we have that 
\begin{equation*}\begin{split} |\Omega| = \frac{P(\Omega) r_{\Omega}}{N}& \Longleftrightarrow  \int_{0}^{r_{\Omega}} L(t)\,dt= P(\Omega) \int_{0}^{r_{\Omega}} \left(1-\frac{t}{r_{\Omega}}\right)^{N-1}\,dt \\
 & \Longleftrightarrow  
\int_{0}^{r_{\Omega}}\!\!\left(L(t)-P(\Omega)\left(1-\frac{t}{r_{\Omega}}\right)^{N-1}\right)\,dt=0,\\
& \Longleftrightarrow  L(t)=P(\Omega)\left(1-\frac{t}{r_{\Omega}}\right)^{N-1}\qquad \forall t\in (0,r_{\Omega}),
\end{split}
\end{equation*}
where the last equivalence follows from \eqref{larson}.
Therefore, if a convex bounded open set  $\Omega\subseteq \mathbb R^N$ satisfies \eqref{optimal}, then, combining \eqref{merco}, \eqref{larsoncond} and \eqref{lun2}, we get that the equality in  \eqref{lowerdistanzabeta} is attained. Conversely, if  \eqref{lowerdistanzabeta} holds as an equality, then  \eqref{lun3} and \eqref{merco} imply that 
\[\int_0^{r_\Omega} t^{\beta}L(t)\,dt
= P(\Omega)\int_0^{r_{\Omega}} t^{\beta}\left(1-\frac{t}{r_{\Omega}}\right)^{N-1}\,dt,
\]
that is 
\[\int_{0}^{r_{\Omega}}\!\! t^{\beta} \left(L(t)-P(\Omega)\left(1-\frac{t}{r_{\Omega}}\right)^{N-1}\right)\,dt=0.\]
Hence, \eqref{larson} yields \eqref{larsoncond}  and this  implies the validity of   \eqref{optimal}.  \end{proof}

\begin{rem}
We note that the identity \eqref{optimal} is trivially attained by  balls. Another class of sets where it holds is given by polytopes $\Omega$ of dimension $N$ whose facets $F_k$, $k\in\{1,\dots,K\}$, are tangent to the same ball $B_R(x_0)\subseteq \Omega$. Indeed, for every $k\in\{1,\dots,K\}$ let $T_k$ be the convex hull of $F_k\cup\{x_0\}$. Without loss of generality, we can assume that $F_k\subseteq \{(x',0), x'\in \mathbb R^{N-1}\}$. Then, setting $F_k(t):=\{x'\in \mathbb R^{N-1}\,:\, (x',t)\in T_k\}$, we have that
\begin{equation*}\begin{split}
|T_k|&=\int_{0}^R \mathcal H^{N-1} (F_k(t)) dt=\int_{0}^R  \mathcal H^{N-1} \left(\frac  t R  F_k\right ) dt \\
&= \mathcal H^{N-1} \left( F_k\right )   \int_{0}^R  \left(\frac  t R  \right )^{N-1} dt=  \mathcal H^{N-1} (F_k) \frac R N,\end{split}
\end{equation*}
where  $\mathcal H^{N-1}$ is   the $(N-1)$-dimensional Hausdorff measure.
Since $r_{\Omega}=R$,   $\overline{\Omega}=\bigcup_{k=1}^K T_k$ and $|T_k\cap T_j|=0$ for $j\neq k$, we obtain
\[
|\Omega|=\sum_{k=1}^K |T_k|
=\sum_{k=1}^K \frac{\mathcal H^{N-1}(F_k)   r_{\Omega}}{N}
=\frac{P(\Omega) r_{\Omega}}{N},
\]
that is, \eqref{optimal} holds.
\end{rem}

\begin{rem}[Homothetic sets and form body]\label{def:homothetic}
In order to describe the geometric structure of sets satisfying the equality \eqref{optimal}, we recall two standard notions from convex geometry: homotheticity and the form body associated with a convex set.

Two sets $A,B\subset \mathbb{R}^N$ are said to be \emph{homothetic} if there exist $x_0\in\mathbb{R}^N$ and $\lambda>0$ such that
\[
A=x_0+\lambda B.
\]
Following \cite{S}, for a convex body $\Omega\subset\mathbb{R}^N$ we denote by $\mathcal{U}(\Omega)$ the set of outward unit normals at regular boundary points and we define its \emph{form body} by
\[
\Omega_*:=\bigcap_{u\in\mathcal{U}(\Omega)} \bigl\{x\in\mathbb{R}^N:\ \langle x,u\rangle\le 1\bigr\}.
\]
Equivalently, $\Omega_*$ is the unique convex body whose supporting hyperplanes with normals in $\mathcal{U}(\Omega)$ are tangent to the unit ball.
Now, thanks to \eqref{larsoncond} and to \cite[Theorem 1.2]{lar}, we have that $\Omega$ satisfies \eqref{optimal} if and only if $\Omega$ is homothetic to its form body. 
\end{rem}

In order to conclude the proof of Theorem \ref{thm_bound}, in the following proposition we  show the upper bound in \eqref{pervolext}. 
\begin{prop}\label{upper-estim}
Let $\beta\geq 0$. Then
\begin{equation}\label{sup}
\sup\left\{\frac{\int_{\Omega} d^{\beta}_{\Omega}\,dx}{r_{\Omega}^{\beta+1}P(\Omega)}:\, \Omega \ \hbox{bounded convex open set}\right\}
= \frac{1}{\beta+1},
\end{equation}
and the equality   is asymptotically attained, as $L\to \infty$, by the  sequence  $
\Omega_L$ given by \eqref{slab}.
\end{prop}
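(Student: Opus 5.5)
The plan has two parts. First, an upper bound $\le \frac{1}{\beta+1}$ valid for every bounded convex open set. Second, asymptotic sharpness along the slabs $\Omega_L$.

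\textbf{Upper bound.} I start from the coarea identity \eqref{merco}, $\int_\Omega d_\Omega^\beta\,dx=\int_0^{r_\Omega} t^\beta L(t)\,dt$. By \cite[Lemma 2.2.2]{BB}, $L$ is monotonically decreasing on $[0,r_\Omega]$, so $L(t)\le L(0)=P(\Omega)$ for every $t\in[0,r_\Omega]$. Hence
\[
\int_\Omega d_\Omega^\beta\,dx\le P(\Omega)\int_0^{r_\Omega}t^\beta\,dt=\frac{P(\Omega)\,r_\Omega^{\beta+1}}{\beta+1}.
\]
This gives the non-strict upper bound, and therefore the supremum in \eqref{sup} is at most $\frac1{\beta+1}$.

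For the strict inequality in \eqref{pervolext}, equality would force $L(t)=P(\Omega)$ for a.e. $t\in(0,r_\Omega)$. This is impossible. As $t\to r_\Omega$, the inner parallel set $\Omega(t)$ shrinks to the set where $d_\Omega=r_\Omega$, which has empty interior. If it had interior, then $d_\Omega>r_\Omega$ somewhere nearby, a contradiction. So $\Omega(t)$ converges in the Hausdorff sense to a convex set of dimension at most $N-1$. Its perimeter tends to at most twice the $(N-1)$-measure of that limit set, which is strictly smaller than $P(\Omega)$. The cleanest way to see the strict drop is to note that the inner parallel sets of $\Omega(t)$ are contained in those of $\Omega$, with strict inclusion of the sets. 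By monotonicity of perimeter under inclusion of convex sets, combined with continuity of $L$ (which follows from the concavity of $L^{1/(N-1)}$), $L$ stays strictly below $P(\Omega)$ on a left neighbourhood of $r_\Omega$. This makes the inequality strict. I expect this strictness point to be the only delicate step. If needed, I would instead invoke the strict version $|\Omega|<r_\Omega P(\Omega)$ from \eqref{pervol}, applied to $\Omega(t)$ for $t$ close to $r_\Omega$, which would give $L(t)<P(\Omega)$ there.

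\textbf{Sharpness.} Take $\Omega_L$. Here $r_{\Omega_L}=\frac12$ for $L\ge1$. For such $L$ we have $\Omega_L(t)=\left(-\frac L2+t,\frac L2-t\right)^{N-1}\times(t,1-t)$ when $t<\frac12$. Therefore
\[
L(t)=2(L-2t)^{N-1}+2(N-1)(L-2t)^{N-2}(1-2t),
\]
so $L(t)/L^{N-1}\to 2$ uniformly on $[0,\frac12]$, while $P(\Omega_L)/L^{N-1}\to2$. Dominated convergence in $\int_0^{1/2}t^\beta L(t)\,dt$ then gives
\[
\frac{\int_{\Omega_L} d^\beta_{\Omega_L}\,dx}{r_{\Omega_L}^{\beta+1}P(\Omega_L)}\longrightarrow \frac{2\int_0^{1/2}t^\beta\,dt}{2\,(1/2)^{\beta+1}}=\frac1{\beta+1}.
\]
Together with the upper bound, this proves \eqref{sup} and shows that the supremum is attained asymptotically by $\Omega_L$.
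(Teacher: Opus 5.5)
Your proof is correct, but it takes a different route from the paper's.

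The paper chains two known estimates:
\begin{itemize}
\item the bound $\int_\Omega d_\Omega^\beta\,dx\le |\Omega|\,r_\Omega^\beta/(\beta+1)$ from \cite[Remark 3.1]{PZ};
\item the strict bound $|\Omega|<r_\Omega P(\Omega)$ from \eqref{pervol}.
\end{itemize}
It gets sharpness by quoting that both are asymptotically attained by $\Omega_L$.

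You work directly with the coarea representation \eqref{merco} and the monotonicity $L(t)\le L(0)=P(\Omega)$, which gives the bound in one line. You then verify sharpness by computing $L(t)$ explicitly on the boxes. Your formula $L(t)=2(L-2t)^{N-1}+2(N-1)(L-2t)^{N-2}(1-2t)$ is right, as is the uniform limit $L(t)/L^{N-1}\to 2$, and together they give exactly $1/(\beta+1)$.

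What each route buys:
\begin{itemize}
\item Yours is more self-contained, since it uses only facts already recalled in Section~\ref{intera}.
\item The paper's passes through the sharper intermediate quantity $|\Omega|\,r_\Omega^\beta/(\beta+1)$. That quantity can itself be read off \eqref{merco} via Chebyshev's inequality, because $t^\beta$ is increasing and $L$ is decreasing.
\item The paper's route also yields the strict inequality in \eqref{pervolext} for free, from the strict part of \eqref{pervol}.
\end{itemize}

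Strictness is not part of this proposition, which only identifies the supremum, so your argument is complete without it. However, the paragraph you devote to strictness would not stand as written:
\begin{itemize}
\item Applying \eqref{pervol} to $\Omega(t)$ gives $|\Omega(t)|<(r_\Omega-t)L(t)$. That is a lower bound on $L(t)$, not the statement $L(t)<P(\Omega)$.
\item You assert, but do not prove, that $\lim_{t\to r_\Omega^-}L(t)=2\mathcal H^{N-1}(\{d_\Omega=r_\Omega\})$ is strictly smaller than $P(\Omega)$. The sentence about strict inclusion of parallel sets does not supply this.
\end{itemize}
The fix is the argument you almost wrote, applied to $\Omega$ itself. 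Equality would force $L=P(\Omega)$ a.e.\ on $(0,r_\Omega)$, hence $|\Omega|=\int_0^{r_\Omega}L(t)\,dt=r_\Omega P(\Omega)$. This contradicts the strict upper bound in \eqref{pervol}.
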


\begin{proof}
Let  $\Omega\subseteq \mathbb R^N$ be a bounded convex open set. Then,  thanks to \cite[Remark 3.1]{PZ}, the following sharp inequality holds:
\begin{equation}\label{prizag2}
\int_{\Omega} d_{\Omega}^{\beta}\,dx \le |\Omega|\, \frac{r_{\Omega}^{\beta}}{\beta+1},
\qquad \mbox{ for every } \beta\geq 0.
\end{equation}
Combining \eqref{prizag2} with the upper bound in \eqref{pervol}, we obtain
\[
\frac{\int_{\Omega} d^{\beta}_{\Omega}\,dx}{P(\Omega)}
\leq \frac{|\Omega|}{P(\Omega)}\, \frac{r_{\Omega}^{\beta}}{\beta+1}
< \frac{r_{\Omega}^{\beta+1}}{\beta+1}.
\]
Moreover, since \eqref{prizag2} and the upper bound in \eqref{pervol} are both asymptotically attained by the slab-type sequence $\Omega_L$ (see, e.g., \cite[Remark B.2]{Bra1}), we get \eqref{sup}.
\end{proof}

\subsection{Inradius--volume bounds}
\label{stimeinradiusvolume}

In this subsection, we focus on a counterpart of the sharp estimate \eqref{prizag2}. The following result generalizes \cite[Proposition 6.1]{BBPtorsion}, which corresponds to the case $\beta=1$.

\begin{prop}\label{controinradius-volume}
For every $\beta>0$, it holds
\begin{equation}\label{inradius-vol}
\min \left\{r^{-\beta}_\Omega \fint_{\Omega} d^{\beta}_{\Omega}\,dx :\,  \Omega \ \hbox{bounded convex open set} \right\}
= \binom{N+\beta}{N}^{-1}.
\end{equation}
The minimum is attained by $\Omega=B_1$.
\end{prop}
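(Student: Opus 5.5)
We need to show that $\int_\Omega d_\Omega^\beta\,dx \ge \binom{N+\beta}{N}^{-1} r_\Omega^\beta |\Omega|$, with equality for the ball.

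The approach uses the coarea representation \eqref{merco} together with the Brunn--Minkowski concavity of $t\mapsto L(t)^{\frac{1}{N-1}}$. It is most convenient to work with the volume of the parallel sets, $A(t)=|\Omega(t)|$. Since $d_\Omega$ is concave, for $x\in\Omega$ and a point $x_0$ with $d_\Omega(x_0)=r_\Omega$ we have
\[
d_\Omega(x_0+s(x-x_0))\ge (1-s)r_\Omega+s\,d_\Omega(x).
\]
Hence $\Omega(t)\supseteq x_0+(1-t/r_\Omega)(\Omega-x_0)$, which gives
\[
A(t)\ge |\Omega|\Bigl(1-\frac{t}{r_\Omega}\Bigr)^N \qquad\text{for } t\in[0,r_\Omega].
\]
This is the volume analogue of Larson's bound \eqref{larson}, and it is elementary.

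Next, by the layer-cake formula,
\[
\int_\Omega d_\Omega^\beta\,dx=\int_0^{r_\Omega}\beta t^{\beta-1}A(t)\,dt \ge |\Omega|\,\beta\int_0^{r_\Omega} t^{\beta-1}\Bigl(1-\frac{t}{r_\Omega}\Bigr)^N dt .
\]
The last integral equals $r_\Omega^\beta\,\beta B(\beta,N+1)$, and
\[
\beta B(\beta,N+1)=\frac{\Gamma(\beta+1)\Gamma(N+1)}{\Gamma(N+\beta+1)}=\binom{N+\beta}{N}^{-1}.
\]
Dividing by $|\Omega|\,r_\Omega^\beta$ yields the inequality $r_\Omega^{-\beta}\fint_\Omega d_\Omega^\beta\,dx\ge\binom{N+\beta}{N}^{-1}$ for every bounded convex open set. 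Here $\beta>0$ is used in the layer-cake formula.

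For attainment, take $\Omega=B_1$. There $d_\Omega(x)=1-|x|$ and $A(t)=\omega_N(1-t)^N$ exactly, so equality holds throughout; one can also check this directly by the radial computation $N\omega_N\int_0^1(1-\rho)^\beta\rho^{N-1}\,d\rho$. Thus the infimum is a minimum.

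There is no real obstacle. The only point requiring care is justifying the inclusion of the scaled copy in $\Omega(t)$ via the concavity of $d_\Omega$ (quoted from \cite{AK}), and checking the Beta-function identity.
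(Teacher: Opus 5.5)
Your proof is correct, but it follows a different route from the paper's. The paper applies Borell's reverse H\"older inequality (Theorem~\ref{theo.borell}) to $f=d_\Omega$ with $q=\beta$ and finite $s$. It then lets $s\to\infty$, identifying $\lim_{s\to\infty}C_{s,\beta}=\binom{N+\beta}{N}^{1/\beta}$ through the extremal pair $B_1$, $1-|x|$.

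You instead obtain the distribution bound $|\Omega(t)|\ge|\Omega|(1-t/r_\Omega)^N$ directly from concavity of $d_\Omega$: for $t<r_\Omega$ one has $d_\Omega\bigl(x_0+(1-t/r_\Omega)(x-x_0)\bigr)\ge t+(1-t/r_\Omega)\,d_\Omega(x)>t$. You then integrate this bound against $\beta t^{\beta-1}$. In effect this is a self-contained proof of the endpoint case $s=\infty$ of Borell's inequality, with no external theorem and no limiting argument.

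The paper's choice has the merit of reusing a tool it needs anyway, with finite exponents ($s=\beta$, $q=1$), in the proof of Theorem~\ref{reverseholder}.

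Your route gives the equality cases almost for free. Equality forces $|\Omega(t)|=|\Omega|(1-t/r_\Omega)^N$ for every $t$. By coarea this gives $L(t)=\frac{N|\Omega|}{r_\Omega}(1-t/r_\Omega)^{N-1}$ for a.e.\ $t$. Since $L\le L(0)=P(\Omega)$, letting $t\to0$ yields $|\Omega|\le P(\Omega)r_\Omega/N$, and \eqref{pervol} then gives \eqref{optimal}. Conversely, under \eqref{optimal}, integrating \eqref{larsoncond} gives $|\Omega(t)|=|\Omega|(1-t/r_\Omega)^N$ and hence equality. So the minimizers in \eqref{inradius-vol} are exactly the sets satisfying \eqref{optimal} (for instance any polytope whose facets are all tangent to one ball), not only $B_1$.

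A cosmetic point: your first sentence announces \eqref{merco} and the Brunn--Minkowski concavity of $L^{1/(N-1)}$, but the argument uses neither. It needs only the concavity of $d_\Omega$ and the layer-cake formula.
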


The proof relies on the following reverse H\"older inequality due to  Borell (we refer to \cite[Theorem 4.1]{bor73} for the case $q\ge 0$ and to \cite[Theorem 5.1]{gar98} for the extension to $q>-1$ and for the equality case).  We will use this result also in the proof of the key estimate \eqref{sharpdis}.

\begin{thm}\label{theo.borell}
Let $0\le q\le s$. Then, for every convex set $E\subseteq \mathbb R^N$ and every nonnegative concave function $f$ on $E$, we have
\[
\left(\fint_E f^s\,dx\Big)^{1/s}\le C_{s,q}\right(\fint_E f^q\,dx\Big)^{1/q},
\]
where
\[
C_{s,q}=\binom{N+q}{N}^{1/q}\binom{N+s}{N}^{-1/s}.
\]
Moreover, equality holds when $E$ is a ball of radius $1$ and $f(x)=1-|x|$.
\end{thm}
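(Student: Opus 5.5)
The plan is to reduce the inequality to a one-dimensional comparison for the distribution function of $f$. The two ingredients are the Brunn--Minkowski inequality and a single-crossing argument against the extremal profile coming from the cone $f(x)=1-|x|$ on $B_1$. Normalize $|E|$ out by working with averages. Set $M=\sup_E f$ and define
\[
\mu(t):=\frac{|\{x\in E:\ f(x)>t\}|}{|E|},\qquad t\in[0,M].
\]
The layer-cake formula gives, for every $r>0$,
\[
\fint_E f^r\,dx = r\int_0^M t^{r-1}\mu(t)\,dt .
\]

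\textbf{Step 1: concavity of $\mu^{1/N}$.} By concavity of $f$, the superlevel sets $U_t=\{f>t\}$ are convex and satisfy
\[
U_{(1-\lambda)t_0+\lambda t_1}\supseteq (1-\lambda)U_{t_0}+\lambda U_{t_1}.
\]
The Brunn--Minkowski inequality then gives that $t\mapsto \mu(t)^{1/N}$ is concave on $[0,M)$. It is nonincreasing, and $\mu(0)=1$ up to a null set, since $f>0$ in the interior of $E$ unless $f\equiv 0$, a trivial case.

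\textbf{Step 2: the comparison profile.} For $a>0$ let $g_a(t)=(1-t/a)_+^N$. This is exactly the distribution function of $a(1-|x|)$ on $B_1$. Using Beta integrals,
\[
q\int_0^\infty t^{q-1}g_a\,dt=a^q\binom{N+q}{N}^{-1}.
\]
Choose $a$ so that $\int t^{q-1}g_a=\int t^{q-1}\mu$. Then consider $h=\mu^{1/N}-g_a^{1/N}$ on $[0,\min(a,M)]$. It is concave and vanishes at $0$, so the set where $h\ge 0$ is an interval $[0,t_0]$. Hence $\mu\ge g_a$ on $[0,t_0]$ and $\mu\le g_a$ on $[t_0,a]$.

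If $\mu$ were positive beyond $a$, concavity would force $h\ge0$ on all of $[0,a]$, hence $\mu\ge g_a$ everywhere. Equality of the $q$-moments would then give $\mu=g_a$ a.e., so this case is consistent. In every case $\mu-g_a$ changes sign at most once, from $+$ to $-$, at $t_0$.

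\textbf{Step 3: the moment comparison.} Since $t^{s-q}$ is increasing,
\[
\int t^{s-1}(\mu-g_a)\,dt=\int t^{s-q}\,t^{q-1}(\mu-g_a)\,dt\le t_0^{s-q}\int t^{q-1}(\mu-g_a)\,dt=0 .
\]
Therefore
\[
\fint f^s\le a^s\binom{N+s}{N}^{-1}\quad\text{and}\quad \fint f^q=a^q\binom{N+q}{N}^{-1}.
\]
Eliminating $a$ gives exactly the constant $C_{s,q}$. Equality for the cone on $B_1$ is immediate, because there $\mu=g_1$.

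\textbf{Main obstacle.} The delicate step is the single-crossing claim in Step 2. It needs care at the endpoints: when $M\ne a$, and when $\mu$ has a jump at $0$, where concavity of $\mu^{1/N}$ is only known on the open interval. I would handle this by extending $h$ by upper semicontinuity. The case $q=0$ is interpreted as the geometric mean $\exp\fint\log f$ and obtained by letting $q\to0^+$, since both sides are continuous in $q$. For the sharp characterization of equality and the range $q>-1$ I would simply appeal to \cite[Theorem 5.1]{gar98}.
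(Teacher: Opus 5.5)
Your proof is correct. The paper gives no proof of this theorem: it imports it from Borell \cite[Theorem 4.1]{bor73} and Gardner--Zhang \cite[Theorem 5.1]{gar98}, and those references also supply the equality characterization and the extension to $q\in(-1,0)$. Your argument is essentially the classical Berwald--Borell proof. You get concavity of $\mu^{1/N}$ from Brunn--Minkowski, then compare single-crossing moments against the cone profile $g_a(t)=(1-t/a)_+^N$. It is a self-contained proof for every $0<q\le s$. That is exactly the range the paper needs: $q=\beta>0$ in Proposition~\ref{controinradius-volume} and $q=1$ in Theorem~\ref{reverseholder}. Your $q\to0^+$ limit for the endpoint case is also fine.

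Three small corrections, none affecting the conclusion.

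\begin{enumerate}
\item In Step 2, the case where $\mu>0$ somewhere beyond $a$ is not ``consistent'' but impossible. Suppose $\mu(t_1)>0$ with $a<t_1<M$. Concavity then gives $\mu^{1/N}(t)\ge 1-t/t_1>1-t/a$ on $(0,a]$, and also $\mu>0=g_a$ on $(a,t_1]$. So $\int t^{q-1}(\mu-g_a)\,dt>0$, contradicting the choice of $a$. Hence $M\le a$ always.

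\item The endpoint issues you flag as the main obstacle are not real obstacles, and no semicontinuous extension is needed.
\begin{itemize}
\item $\mu$ is right-continuous with $\mu(0)=1$, because $f>0$ on the interior of $E$ and $|\partial E|=0$.
\item Brunn--Minkowski applies to the nonempty convex sets $U_{t_0},U_{t_1}$ for all $0\le t_0,t_1<M$. So $\mu^{1/N}$ is concave on $[0,M)$, including $t=0$.
\item The only possible discontinuity is a downward jump at $t=M$, when $\{f=M\}$ has positive measure. It lands where $\mu=0\le g_a$, so it cannot create a second sign change.
\end{itemize}

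\item State the implicit hypothesis $0<|E|<\infty$. It forces $E$ to be bounded with nonempty interior. Then $M<\infty$, since a concave function lies below an affine function (a supergradient at an interior point). This is what makes the moments finite and the choice of $a$ possible.
\end{enumerate}
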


\begin{proof}[Proof of Proposition \ref{controinradius-volume}]
Let $\Omega\subset\mathbb{R}^N$ be a convex bounded open set.   Since $d_\Omega$ is concave on $\Omega$, by applying Theorem \ref{theo.borell} to $f=d_\Omega$ with $q=\beta$ we obtain
\begin{equation}\label{p-norma}
\Big(\fint_\Omega d_\Omega^s\,dx\Big)^{1/s}\le C_{s,\beta}\Big(\fint_\Omega d_\Omega^{\beta}\,dx\Big)^{1/\beta},
\qquad \forall \,s\ge \beta>0.
\end{equation}
Since \eqref{p-norma} is an equality when $\Omega=B_1$, we have
\[
\displaystyle C_{s,\beta}
=\frac{\left(\displaystyle\fint_{B_1} (1-|x|)^s\,dx\right)^{1/s}}{\left(\displaystyle\fint_{B_1} (1-|x|)^\beta\,dx\right)^{1/\beta}}
=\frac{\left(\displaystyle \fint_{B_1} (1-|x|)^s\,dx\right)^{1/s}}{\displaystyle \binom{N+\beta}{N}^{-1/\beta}}.
\]
Moreover,
\[
\Big(\fint_{B_1} (1-|x|)^s\,dx\Big)^{1/s}\to \|1-|x|\|_{L^\infty(B_1)}=1,
\qquad \text{as } s\to\infty,
\]
and therefore
\begin{equation}\label{cpbeta}
\lim_{s\to\infty} C_{s,\beta}=\binom{N+\beta}{N}^{1/\beta}.
\end{equation}
Passing to the limit as $s\to\infty$ in \eqref{p-norma} and using \eqref{cpbeta}, we obtain
\[
\|d_\Omega\|_{L^\infty(\Omega)} \le \binom{N+\beta}{N}^{1/\beta}\Big(\fint_\Omega d_\Omega^{\beta}\,dx\Big)^{1/\beta}.
\]
Since $\|d_\Omega\|_{L^\infty(\Omega)}=r_\Omega$, this yields
\[
\fint_\Omega d_\Omega^{\beta}\,dx\ge \binom{N+\beta}{N}^{-1} r_\Omega^\beta,
\]
and  the above inequality  holds as an equality when  $\Omega=B_1$, thanks to  Theorem \ref{theo.borell}.   
This concludes the proof of  \eqref{inradius-vol}.
\end{proof}

\subsection{Perimeter--volume bounds}
\label{stimeperimetrovolume}
The main result of this section is the following theorem, which is proved 
under the additional assumption $\beta\ge 1$.

\begin{thm}\label{reverseholder}
Let $\beta\ge 1$. Then, for every bounded convex open set $\Omega$, the following two-sided sharp estimate holds 
\begin{equation}\label{lowerdistanzabeta1}
\frac 1 {\beta+1}\leq \left(\frac{P(\Omega)}{|\Omega|}\right)^\beta \fint_{\Omega} d_{\Omega}^{\beta}\,dx
\le N^\beta \binom{N+\beta}{N}^{-1}.
\end{equation} 
Moreover, the  equality in the upper bound  is attained if and only if   $\Omega$ satisfies \eqref{optimal}, while the lower bound is asymptotically attained, as $L\to \infty$, by the sequence of slab-type domains $\Omega_L $ given by \eqref{slab}.
\end{thm}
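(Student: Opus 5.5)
The lower bound in \eqref{lowerdistanzabeta1} is a bathtub-type argument and actually holds for every $\beta\ge 0$. The upper bound is obtained by reducing to the case $\beta=1$ via Borell's reverse H\"older inequality (Theorem \ref{theo.borell}). The case $\beta=1$ is the main obstacle.

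\emph{Lower bound.} By the coarea formula \eqref{merco}, $|\Omega|=\int_0^{r_\Omega}L(t)\,dt$ and $D_\beta(\Omega)=\int_0^{r_\Omega}t^\beta L(t)\,dt$, with $L$ nonincreasing and $0\le L\le P(\Omega)$. Among all measurable $0\le L\le P(\Omega)$ with prescribed integral $|\Omega|$, the functional $\int t^\beta L$ (with $t^\beta$ increasing) is minimized by pushing all the mass to the left. That is, we take $L^\ast=P(\Omega)\,\chi_{[0,|\Omega|/P(\Omega)]}$, which is admissible since $|\Omega|/P(\Omega)\le r_\Omega$ by \eqref{pervol}. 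To make this rigorous, I would write
\[
\int_0^{r_\Omega} t^\beta\bigl(L-L^\ast\bigr)\,dt\ \ge\ s_0^\beta\int_0^{r_\Omega}\bigl(L-L^\ast\bigr)\,dt=0,\qquad s_0=|\Omega|/P(\Omega).
\]
This holds because $L-L^\ast\le 0$ on $[0,s_0]$ and $L-L^\ast\ge0$ afterwards. It yields $D_\beta(\Omega)\ge P(\Omega)\,s_0^{\beta+1}/(\beta+1)$, which is the left inequality after dividing by $|\Omega|$. Sharpness along $\Omega_L$ follows because there $L(t)\approx L^{N-1}$ is essentially constant on $(0,1/2)$ and $r_{\Omega_L}=1/2\approx |\Omega_L|/P(\Omega_L)$.

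\emph{Upper bound, reduction to $\beta=1$.} Since $d_\Omega$ is concave, I apply Theorem \ref{theo.borell} with $q=1$ and $s=\beta\ge1$. This is where $\beta\ge 1$ is used. It gives
\[
\fint_\Omega d_\Omega^\beta\,dx\le C_{\beta,1}^\beta\Big(\fint_\Omega d_\Omega\,dx\Big)^\beta=(N+1)^\beta\binom{N+\beta}{N}^{-1}\Big(\fint_\Omega d_\Omega\,dx\Big)^\beta .
\]
Hence it suffices to prove the case $\beta=1$, namely
\[
\int_\Omega d_\Omega\,dx\le \frac{N}{N+1}\,\frac{|\Omega|^2}{P(\Omega)}.
\]
Substituting this bound gives exactly the constant $N^\beta\binom{N+\beta}{N}^{-1}$.

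\emph{The case $\beta=1$ (main obstacle).} Set $g(t):=L(t)-P(\Omega)(1-t/r_\Omega)^{N-1}\ge0$, which is nonnegative by Larson's bound \eqref{larson}. Put $G_0=\int_0^{r_\Omega} g$ and $G_1=\int_0^{r_\Omega} t g$. Then $|\Omega|=P r/N+G_0$ and $D_1=P r^2/(N(N+1))+G_1$, and the target inequality becomes
\[
(N+1)\,G_1\le 2r\,G_0+\frac{N}{P}\,G_0^2 .
\]
The trivial bound $G_1\le rG_0$ is not enough, so more structure must be used. I would exploit that $L$ is nonincreasing, so $g(t)\le P-P(1-t/r)^{N-1}$, and that $L^{1/(N-1)}$ is concave. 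Both constraints are linear or convex in $L$, so I would treat the problem as maximizing $\int tL$ under fixed $\int L$ and these constraints, and identify the extremal profiles. I expect these to be $L(t)=P(1-t/\rho)^{N-1}$-type profiles truncated or combined with a constant part, i.e.\ pyramids over slabs. For such profiles the inequality reduces to an explicit one-variable inequality to check.

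\emph{Equality cases.} If \eqref{optimal} holds, then $g\equiv0$, so $L$ is exactly Larson's profile and a direct computation gives equality in the $\beta$-inequality. Conversely, equality forces equality in the $\beta=1$ step. I would then argue that the extremal-profile analysis gives $g\equiv0$, and hence \eqref{optimal} holds by the characterization \eqref{larsoncond}.
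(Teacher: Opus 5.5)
Your lower bound is correct, and it is more elementary than the paper's argument. The paper obtains it by letting $p\to\infty$ in Brasco's upper bound for $\lambda_{p,\beta}(\Omega)$, using $\lambda_{p,\beta}(\Omega)^{1/p}\to\|d_\Omega\|_{L^\beta(\Omega)}^{-1}$. Your comparison with $P(\Omega)\chi_{[0,|\Omega|/P(\Omega)]}$ needs only $L\le P(\Omega)$ and works for all $\beta\ge0$. Your reduction of the upper bound to $\beta=1$ via Theorem \ref{theo.borell} is exactly what the paper does.

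The genuine gap is the case $\beta=1$, namely $\int_\Omega d_\Omega\,dx\le\frac{N}{N+1}|\Omega|^2/P(\Omega)$. This step carries the whole upper bound and the converse of the equality statement, and you only outline it. The outline also rests on a false premise: for $N\ge3$, ``$L^{1/(N-1)}$ concave'' is not a convex constraint on $L$. For $N=3$, take $L_1\equiv1$ and $L_2=(1-t)^2$ on $[0,1]$. Both are nonincreasing with concave square roots, but $\sqrt{(L_1+L_2)/2}$ is a Euclidean norm of an affine map, hence convex and not affine. So passing to ``extremal profiles'' of a linear functional over this class is not justified. Concavity also cannot be dropped, because Larson's bound and monotonicity alone do not suffice. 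For $N=2$ and $P(\Omega)=r_\Omega=1$, the profile $L(t)=\max\{\tfrac12,1-t\}$ is nonincreasing with $1-t\le L\le 1$. Yet $\int_0^1 tL\,dt=\frac{13}{48}>\frac{25}{96}=\frac23\bigl(\int_0^1 L\,dt\bigr)^2$. So any argument must use concavity in an essential way, and the Larson decomposition through $g$ does not by itself do this.

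The missing idea is to compare $L$ with the volume-matched profile $P(\Omega)(1-\alpha t)^{N-1}$ rather than with Larson's profile.
\begin{itemize}
\item Choose $\alpha\in(0,r_\Omega^{-1}]$ so that $\int_0^{r_\Omega}(1-\alpha t)^{N-1}\,dt=|\Omega|/P(\Omega)$. Such $\alpha$ exists by \eqref{pervol}, since the left-hand side decreases from $r_\Omega$ to $r_\Omega/N$ as $\alpha$ runs over $(0,r_\Omega^{-1}]$.
\item Set $F(t)=(L(t)/P(\Omega))^{1/(N-1)}-(1-\alpha t)$. It is concave on $[0,r_\Omega]$ with $F(0)=0$ and $F(r_\Omega)\le0$. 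Hence $F\ge0$ on some $[0,t_0]$ and $F\le0$ on $[t_0,r_\Omega]$.
\item Both profiles have the same integral, so your bathtub argument run in the opposite direction gives $\int_0^{r_\Omega}t\,L\,dt\le P(\Omega)\int_0^{r_\Omega}t(1-\alpha t)^{N-1}\,dt$.
\item You then need the one-variable inequality $\int_0^{r}t(1-\alpha t)^{N-1}dt\le\frac{N}{N+1}\bigl(\int_0^{r}(1-\alpha t)^{N-1}dt\bigr)^2$. With $x=1-\alpha r\in[0,1)$ it reduces to $x^N(N-1+x^N-Nx)\ge0$, with equality iff $x=0$, i.e.\ $\alpha=r_\Omega^{-1}$.
\end{itemize}
By the choice of $\alpha$, equality means $|\Omega|=P(\Omega)r_\Omega/N$. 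This is exactly the converse of the equality case that your proposal leaves open.
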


In order to prove Theorem \ref{reverseholder}, we start with a preliminary estimate which, in particular, implies that, for every bounded  convex open set $\Omega\subseteq \mathbb R^N$ there exists a   polynomial $p_{\Omega}$ of degree  $N-1$ (whose coefficients depend on $\Omega$) such that 
 $$|\Omega|=P(\Omega)   r_{\Omega}  p_{\Omega}(r_{\Omega}).$$
 
\begin{lemma}\label{lem:alpha-upper}
Let $\Omega \subset \mathbb{R}^N$ be a bounded convex open set. Then there exists $\alpha=\alpha(\Omega)\in \big(0,r_{\Omega}^{-1}\big]$ satisfying
\begin{equation}\label{integalpha}
\frac{|\Omega|}{P(\Omega)} = \frac{1-(1-\alpha r_{\Omega})^N}{N\alpha},
\end{equation}
such that, for every $\beta\ge 0$, it holds
\begin{equation}\label{upperdistanzabeta}
\int_{\Omega} d_{\Omega}^{\beta}\,dx \le P(\Omega)\int_0^{r_{\Omega}} t^\beta(1-\alpha t)^{N-1}\,dt.
\end{equation}
\end{lemma}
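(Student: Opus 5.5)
We work with the profile $L(t)=P(\Omega(t))$ and compare it with the one-parameter family $g_\alpha(t):=P(\Omega)(1-\alpha t)^{N-1}$, $\alpha\in(0,r_\Omega^{-1}]$. By the coarea identity \eqref{merco},
\[
|\Omega|=\int_0^{r_\Omega}L(t)\,dt,\qquad \int_\Omega d_\Omega^\beta\,dx=\int_0^{r_\Omega}t^\beta L(t)\,dt .
\]
The first step is to choose $\alpha$ by matching volumes, i.e.\ by requiring $\int_0^{r_\Omega}g_\alpha\,dt=|\Omega|$. This condition is exactly \eqref{integalpha}.

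To see that such an $\alpha$ exists, consider $\phi(\alpha):=\int_0^{r_\Omega}(1-\alpha t)^{N-1}dt=\frac{1-(1-\alpha r_\Omega)^N}{N\alpha}$. It is continuous and strictly decreasing on $(0,r_\Omega^{-1}]$. As $\alpha\to0^+$ it tends to $r_\Omega$, and at $\alpha=r_\Omega^{-1}$ it equals $r_\Omega/N$. By \eqref{pervol},
\[
\frac{r_\Omega}{N}\le\frac{|\Omega|}{P(\Omega)}<r_\Omega ,
\]
so the intermediate value theorem gives a unique $\alpha\in(0,r_\Omega^{-1}]$ with $\phi(\alpha)=|\Omega|/P(\Omega)$.

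The core step is a single-crossing property: there is $t_0\in[0,r_\Omega]$ such that $L\ge g_\alpha$ on $[0,t_0]$ and $L\le g_\alpha$ on $[t_0,r_\Omega]$. We argue with $(N-1)$-th roots.
\begin{itemize}
\item $\ell(t):=L(t)^{1/(N-1)}$ is concave on $[0,r_\Omega]$ by Brunn--Minkowski.
\item $h(t):=g_\alpha(t)^{1/(N-1)}=P(\Omega)^{1/(N-1)}(1-\alpha t)$ is affine.
\item The two agree at $t=0$, since $L(0)=P(\Omega)$ (using $\Omega(0)=\Omega$ and continuity).
\end{itemize}
Hence $\ell-h$ is concave and vanishes at $0$, so $\{\ell-h\ge 0\}$ is an interval containing $0$, say $[0,t_0]$. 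If $\ell(r_\Omega)$ is degenerate we still have $\ell\ge0$, which causes no trouble. We also use that $L$ and $g_\alpha$ are nonnegative on $[0,r_\Omega]$ (here $\alpha r_\Omega\le1$), so the sign of $L-g_\alpha$ matches the sign of $\ell-h$.

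With this in hand we conclude by a Chebyshev/bathtub argument. Since $\int_0^{r_\Omega}(L-g_\alpha)\,dt=0$ and $t^\beta-t_0^\beta$ has the opposite sign to $L-g_\alpha$ on each side of $t_0$,
\[
\int_0^{r_\Omega} t^\beta (L-g_\alpha)\,dt=\int_0^{r_\Omega}(t^\beta-t_0^\beta)(L-g_\alpha)\,dt\le 0 .
\]
This is \eqref{upperdistanzabeta}. For $\beta=0$ it holds with equality by the choice of $\alpha$.

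The main obstacle is the regularity bookkeeping in the crossing step: justifying $L(0)=P(\Omega)$, i.e.\ right-continuity of $L$ at $0$, and handling the possibility that $L$ vanishes on a final interval or that $t_0=r_\Omega$. We would handle these through the continuity of perimeter under Hausdorff convergence of convex sets together with the monotonicity of $L$. The sign argument itself is then elementary.
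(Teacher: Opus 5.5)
Your proposal is correct and follows the paper's proof essentially step by step. You fix $\alpha$ by requiring $\int_0^{r_\Omega}P(\Omega)(1-\alpha t)^{N-1}\,dt=|\Omega|$, using the strict monotonicity of $\alpha\mapsto\frac{1-(1-\alpha r_\Omega)^N}{N\alpha}$ together with \eqref{pervol}; you obtain the single crossing from the concavity of $L^{1/(N-1)}$ minus an affine function vanishing at $0$; and you conclude with the same bathtub estimate against $t_0^\beta$. The only differences are cosmetic: you keep $P(\Omega)$ inside $g_\alpha$ instead of normalizing $P(\Omega)=1$ and rescaling (which also removes the separate $N=2$ formula), and the ``obstacle'' you flag at $t=0$ is not one, since $\Omega(0)=\Omega$ gives $L(0)=P(\Omega)$ by definition.
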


\begin{proof}
We first assume $P(\Omega)=1$ and show that there exists $\alpha\in(0,r_\Omega^{-1}]$ such that
\begin{equation}\label{integalpha1}
|\Omega|=\frac{1-(1-\alpha r_{\Omega})^N}{N\alpha}.
\end{equation}
If $N=2$, it is sufficient to take
\[
\alpha=2\left(\frac1{r_{\Omega}}-\frac{|\Omega|}{r_{\Omega}^2}\right),
\]
which belongs to $\big(0,r_\Omega^{-1}\big]$ by \eqref{pervol}.
If $N>2$, consider
\[
f(t)=\frac{1-(1-tr_{\Omega})^N}{Nt}=\int_0^{r_{\Omega}}(1-st)^{N-1}\,ds,\qquad t\in\big(0,r_\Omega^{-1}\big].
\]
Then $f$ is continuous on $\big(0,r_\Omega^{-1}\big]$, differentiable on $\big(0,r_\Omega^{-1}\big)$, and
\[
f'(t)=(N-1)\int_0^{r_{\Omega}}(-s)(1-st)^{N-2}\,ds<0\qquad \text{for }t\in\big(0,r_\Omega^{-1}\big),
\]
so $f$ is strictly decreasing. Moreover,
\[
f(r_\Omega^{-1})=\frac{r_\Omega}{N},\qquad \lim_{t\to0^+}f(t)=r_\Omega.
\]
By \eqref{pervol} we have $|\Omega|\in\bigl[\frac{r_\Omega}{N},\,r_\Omega\bigr)$, hence there exists a unique $\alpha\in\big(0,r_\Omega^{-1}\big]$ such that $f(\alpha)=|\Omega|$, i.e.\ \eqref{integalpha1} holds.

Define $\gamma(t)=1-\alpha t$, which is nonnegative on $[0,r_\Omega]$. By the coarea formula,
\begin{equation}\label{integ}
\int_0^{r_\Omega}(\gamma(t))^{N-1}\,dt=f(\alpha)=|\Omega|=\int_0^{r_\Omega}L(t)\,dt,
\end{equation}
where $L(t)=P(\Omega(t))$ with $\Omega(t)$  defined by \eqref{parallelset}.
Since $t\mapsto L(t)^{1/(N-1)}$ is concave on $[0,r_\Omega]$, the function
\[
F(t)=L(t)^{1/(N-1)}-\gamma(t)
\]
is concave, with $F(0)=0$ and
\[
F(r_\Omega)=L(r_\Omega)^{1/(N-1)}-\gamma(r_\Omega)=0-1+\alpha r_\Omega\le 0.
\]
Therefore there exists $t_0\in(0,r_\Omega]$ such that
\[
F(t)\ge 0 \ \text{for }t\in(0,t_0],\qquad F(t)\le 0 \ \text{for }t\in[t_0,r_\Omega].
\]
Equivalently, setting $\Phi(t):=L(t)-(\gamma(t))^{N-1}$, we have $\Phi(t)\ge 0$ on $(0,t_0]$ and $\Phi(t)\le 0$ on $[t_0,r_\Omega]$.
Using again the coarea formula, for every $\beta\ge 0$ we can write
\begin{equation}\label{stimaintdis}
\int_\Omega d_\Omega^\beta\,dx
=\int_0^{r_\Omega} t^\beta L(t)\,dt
=\int_0^{r_\Omega} t^\beta \Phi(t)\,dt+\int_0^{r_\Omega} t^\beta (\gamma(t))^{N-1}\,dt.
\end{equation}
Since $t^\beta\le t_0^\beta$ on $[0,t_0]$ and $t^\beta\ge t_0^\beta$ on $[t_0,r_\Omega]$, and $\Phi$ changes sign accordingly, we obtain
\[
\int_0^{r_\Omega} t^\beta \Phi(t)\,dt
\le t_0^\beta\int_0^{r_\Omega}\Phi(t)\,dt.
\]
By \eqref{integ} we have $\int_0^{r_\Omega}\Phi(t)\,dt=0$, hence  \eqref{stimaintdis} implies 
\[
\int_\Omega d_\Omega^\beta\,dx \le \int_0^{r_\Omega} t^\beta(1-\alpha t)^{N-1}\,dt,
\]
which is \eqref{upperdistanzabeta} when $P(\Omega)=1$.

The general case follows by a scaling  argument. For $t=(P(\Omega))^{-1/(N-1)}$ set $\Omega'=t\Omega$, so that $P(\Omega')=1$ and $r_{\Omega'}=t r_\Omega$. Moreover,
\[
|\Omega'|=t^N|\Omega|,\qquad d_{\Omega'}(x)=t\,d_\Omega(x/t).
\]
Applying the previous step to $\Omega'$ and then  scaling back,  we get  \eqref{upperdistanzabeta}, with \[\alpha(\Omega)=t\,\alpha(\Omega')< \frac t {r_{\Omega'}}= r^{-1}_{\Omega},\] and \eqref{integalpha} follows from \eqref{integalpha1} by the same scaling.
\end{proof}

\begin{lemma}\label{lem:step1}
Let $N\ge 2$, $r>0$ and $0<\alpha\le r^{-1}$. Then
\begin{equation}\label{step1}
\int_0^{r} t (1-\alpha t)^{N-1}\,dt\leq \frac{N}{N+1} \left(\int_0^{r}  (1-\alpha t)^{N-1}\,dt\right)^2.
\end{equation}
Moreover, equality  in \eqref{step1} holds if and only if $\alpha=r^{-1}$.
\end{lemma}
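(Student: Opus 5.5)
The plan is to reduce \eqref{step1} to a one-variable inequality on $[0,1]$ by scaling, and then study it through the sign of its derivative. Substituting $s=\alpha t$ and setting $a:=\alpha r\in(0,1]$, both sides of \eqref{step1} carry the common factor $\alpha^{-2}$. The inequality is therefore equivalent to $g(a)\ge 0$, where
\[
g(a):=\frac{N}{N+1}\left(\frac{1-(1-a)^N}{N}\right)^2-\int_0^a s(1-s)^{N-1}\,ds,\qquad a\in[0,1].
\]
Here I used $\int_0^a(1-s)^{N-1}\,ds=\frac{1-(1-a)^N}{N}$. The equality statement becomes: $g(a)=0$ for $a\in(0,1]$ if and only if $a=1$.

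First I check the endpoints. Clearly $g(0)=0$. At $a=1$ we have $\int_0^1 s(1-s)^{N-1}\,ds=B(2,N)=\frac{1}{N(N+1)}$, while the first term equals $\frac{N}{N+1}\cdot\frac1{N^2}=\frac{1}{N(N+1)}$, so $g(1)=0$. This already shows that equality holds when $\alpha=r^{-1}$.

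Next I differentiate and factor:
\[
g'(a)=(1-a)^{N-1}\,h(a),\qquad h(a):=\frac{2}{N+1}\bigl(1-(1-a)^N\bigr)-a.
\]
The sign of $g'$ on $(0,1)$ is the sign of $h$. We have $h(0)=0$ and $h'(0)=\frac{2N}{N+1}-1=\frac{N-1}{N+1}>0$. Moreover $h''(a)=-\frac{2N(N-1)}{N+1}(1-a)^{N-2}<0$ on $(0,1)$ for $N\ge2$, so $h$ is strictly concave. Finally $h(1)=\frac{2}{N+1}-1<0$.

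Strict concavity together with $h(0)=0$, $h'(0)>0$ and $h(1)<0$ yields a unique $a_0\in(0,1)$ with $h>0$ on $(0,a_0)$ and $h<0$ on $(a_0,1)$. Hence $g$ is strictly increasing on $[0,a_0]$ and strictly decreasing on $[a_0,1]$. Since $g(0)=g(1)=0$, it follows that $g>0$ on $(0,1)$. This gives \eqref{step1}, with equality exactly when $a=1$, i.e.\ $\alpha=r^{-1}$.

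The only delicate point is the sign analysis of $g'$, specifically showing that $h$ changes sign exactly once. The concavity of $h$ handles this cleanly; the rest is direct computation.
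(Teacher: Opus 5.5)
Your proof is correct. It follows a somewhat different route from the paper's for the key one-variable inequality.

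The reduction is the same as in the paper up to the change of variable $a=1-x$. Your $g(a)$ is exactly $F_N(1-a)/\bigl(N(N+1)\bigr)$, where $F_N$ is the paper's polynomial. The difference lies in how positivity is established.

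\begin{itemize}
\item \textbf{The paper's route.} It evaluates $\int_0^a s(1-s)^{N-1}\,ds$ in closed form and factors the difference exactly as $F_N(x)=x^N\bigl(N-1+x^N-Nx\bigr)$. The second factor is positive on $[0,1)$ by a one-line monotonicity argument: it is the tangent-line (Bernoulli) inequality for $x^N$ at $x=1$. Both the sign and the equality case $x=0$ (i.e.\ $\alpha=r^{-1}$) can be read off directly. As a bonus, the gap has the explicit form $\frac{x^N(N-1+x^N-Nx)}{N(N+1)\alpha^2}$.
\item \textbf{Your route.} You never need the closed form of that integral, only its value $B(2,N)$ at $a=1$. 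You instead run a sign analysis: $g'=(1-a)^{N-1}h$ with $h$ strictly concave, $h(0)=0$, $h'(0)>0$ and $h(1)<0$. This makes $g$ unimodal, and together with $g(0)=g(1)=0$ it gives $g>0$ on $(0,1)$.
\end{itemize}

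The paper's factorization is shorter and more quantitative. Your argument is a more generic calculus proof that does not depend on spotting the factorization, at the cost of a little extra bookkeeping: the existence and uniqueness of the sign change of $h$, which you handle correctly via concavity.
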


\begin{proof}
With the change of variables $s=1-\alpha t$, we have
\[
\int_0^{r} t (1-\alpha t)^{N-1}\,dt
= \frac{1}{\alpha^2}\int_{1-\alpha r}^{1}\bigl(s^{N-1}-s^N\bigr)\,ds
=\frac{1}{\alpha^2}\left(\frac{1-(1-\alpha r)^N}{N}-\frac{1-(1-\alpha r)^{N+1}}{N+1}\right),
\]
while
\[
\left(\int_0^{r} (1-\alpha t)^{N-1}\,dt\right)^2
=\left(\frac{1-(1-\alpha r)^N}{N\alpha}\right)^2.
\]
Setting $x=1-\alpha r\in[0,1)$, inequality \eqref{step1} is equivalent to
\[
\frac{1-x^N}{N}-\frac{1-x^{N+1}}{N+1}\le \frac{1}{N(N+1)}(1-x^N)^2.
\]
Define
\[
F_N(x):=(1-x^N)^2-(N+1)(1-x^N)+N(1-x^{N+1}).
\]
Then, inequality \eqref{step1} holds if and only if $F_N(x)\ge 0$. A direct computation gives
\[
F_N(x)=x^{2N}+(N-1)x^N-Nx^{N+1}=x^N\,g_N(x),
\qquad \hbox{with } \ g_N(x)=N-1+x^N-Nx.
\]
For $x\in(0,1)$, we have that 
\[
g_N'(x)=N(x^{N-1}-1)<0,
\]
so $g_N$ is strictly decreasing on $[0,1)$ and, since $g_N(1)=0$, we have $g_N(x)>0$ for $x\in[0,1)$. Therefore $$F_N(x)>F_{N}(0)\quad   \hbox{ on } (0,1)\,, \qquad \hbox{ and } \qquad F_N(x)=0 \Longleftrightarrow x\in \{0,1\}.$$ 
Hence \eqref{step1} holds  with strict inequality for $0<\alpha<r^{-1}$ and is attained if and only if $\alpha= r^{-1}$.
\end{proof}

We are now in a position to prove Theorem \ref{reverseholder}.

\begin{proof}[Proof of Theorem \ref{reverseholder}] 
Let $\Omega\subset\mathbb{R}^N$ be a convex bounded open set. 
Since $d_\Omega$ is concave on $\Omega$, applying Theorem \ref{theo.borell} to $f=d_\Omega$ with $q=1$ and $s=\beta\ge 1$, we obtain
\begin{equation}\label{beta-norma}
\left(\fint_\Omega d_\Omega^\beta\,dx\right)^{1/\beta}\le C_{\beta,1}\,\fint_\Omega d_\Omega\,dx,\qquad \forall \beta\ge 1,
\end{equation}
where
\[
C_{\beta,1}=\binom{N+1}{N}\binom{N+\beta}{N}^{-1/\beta}=(N+1)\binom{N+\beta}{N}^{-1/\beta}.
\]
Using \eqref{upperdistanzabeta} with $\beta=1$ and Lemma \ref{lem:step1} (with $r=r_\Omega$), we infer
\begin{equation}\begin{split}\label{mercol0}
\fint_\Omega d_\Omega\,dx
&=\frac1{|\Omega|}\int_\Omega d_\Omega\,dx
\le \frac{P(\Omega)}{|\Omega|}\int_0^{r_\Omega} t(1-\alpha t)^{N-1}\,dt\\
&\le \frac{N}{N+1}\frac{P(\Omega)}{|\Omega|}\left(\int_0^{r_\Omega}(1-\alpha t)^{N-1}\,dt\right)^2,
\end{split}\end{equation}
where $\alpha=\alpha(\Omega)$ is given by Lemma \ref{lem:alpha-upper}. By \eqref{integalpha} we have
\begin{equation}\label{mercol}
\int_0^{r_\Omega}(1-\alpha t)^{N-1}\,dt=\frac{|\Omega|}{P(\Omega)},
\end{equation}
hence
\[
\fint_\Omega d_\Omega\,dx \le \frac{N}{N+1}\frac{|\Omega|}{P(\Omega)}.
\]
Plugging this into \eqref{beta-norma} yields
\[
\left(\fint_\Omega d_\Omega^\beta\,dx\right)^{1/\beta}
\le \binom{N+\beta}{N}^{-1/\beta}\,\frac{N|\Omega|}{P(\Omega)},
 \]
which concludes the proof of the upper bound in \eqref{lowerdistanzabeta1}.

Moreover, if a a convex bounded open set $\Omega\subseteq \mathbb R^N$ satisfies \eqref{optimal}, then Proposition \ref{lower-estim} implies
\begin{equation}\label{upperbound}
\fint_\Omega d_\Omega^\beta\,dx
=\binom{N+\beta}{N}^{-1}\left(\frac{N|\Omega|}{P(\Omega)}\right)^\beta,
\end{equation}
so  the upper bound in  \eqref{lowerdistanzabeta1} is attained. Conversely, if  \eqref{upperbound}  holds, then \eqref{beta-norma}, \eqref{mercol0} and  \eqref{mercol} yield  
\[\int_0^{r_\Omega} t(1-\alpha t)^{N-1}\,dt
= \frac{N}{N+1}\left(\int_0^{r_\Omega}(1-\alpha t)^{N-1}\,dt\right)^2.\]
Thanks to Lemma \ref{lem:step1}, this implies that $\alpha=r_{\Omega}^{-1}$ and, by using \eqref{integalpha}, we obtain  that $\Omega$ satisfies \eqref{optimal}.

Now we prove the lower bound in \eqref{lowerdistanzabeta1} when $\beta\geq 1$. 
We recall that, thanks to \cite[Corollary 6.1]{BPZ2}), it holds  \[
\lim_{p\to \infty} \big(\lambda_{p,\beta}(\Omega)\big)^{\frac{1}{p}}=\frac{1}{\| d_{\Omega} \|_{L^\beta(\Omega)}}.
\]
In particular,  \begin{equation*}
\label{eq:pitoinfty}
\lim_{p\to \infty}\pi_{p,\beta} =\lim_{p\to \infty}(\lambda_{p,\beta}((0,1)))^{1/\beta}= \left(\int_0^{\frac 1 2} x^\beta dx+ \int_{\frac 1 2}^1 (1-x)^\beta dx \right)^{-\frac 1 \beta}=2(\beta+1)^{1/\beta}.
\end{equation*}
Hence,   sending  $p\to \infty$ in   the inequality 
\[\left(\lambda_{p,\beta}(\Omega)\right)^{1/p}\frac{|\Omega|^{1-\frac{1}{p}+\frac{1}{\beta}}}{P(\Omega)}\leq\frac{\pi_{p,\beta}}{2} ,\]
proved in  \cite[Main Theorem]{Bra1}, 
we easily  get that

\[ \|d_{\Omega} \|_{L^\beta(\Omega)} \geq \frac 1 {\left(\beta+1\right)^{1/\beta}}\frac {|\Omega|^{1+\frac{1}{\beta}}}{P(\Omega)} ,\]
that is the lower bound in \eqref{lowerdistanzabeta1}. 
Finally, thanks to   the  asymptotic behaviour
		\[  \int_{\Omega_L} d_{\Omega_{L}}^{\beta } \, dx \sim L^{N-1} \, \left(\frac{1}{2}\right)^{\beta} \,\frac{1}{\beta+1}, \qquad \mbox{ as } L \to \infty\]
		 and 
		\[ P(\Omega_L) \sim 2 \, L^{N-1} \qquad \mbox{ and } \qquad |\Omega_L| \sim  L^{N-1}, \qquad \mbox{ as } L \to \infty,\] 
		(see \cite[Proof of Theorem 1.1]{PZ}), we easily get that 
	 \[\lim_{L\to \infty}\left(\frac{P(\Omega_L)}{|\Omega_L|}\right)^\beta \fint_{\Omega} d_{\Omega}^{\beta}\,dx  = \frac{1}{\beta+1},  \]
i.e. the estimate from below in \eqref{lowerdistanzabeta1} is sharp.
\end{proof}

\section{Proof of the $N$-dimensional Makai inequality}\label{proof}

We start with the following preliminary result that will be useful to prove the sharpness of the inequality \eqref{eq:lower bound per}. The argument follows the lines of \cite[Proposition 4.1]{BBPtorsion}, proved in the cases $1=q<p<\infty$ and $1<p=q<\infty$, and we include it for the reader's convenience.

\begin{prop}\label{stimethin}
Let $A\subset\mathbb{R}^{N-1}$ be an open set with finite $\mathcal{H}^{N-1}$-measure and $h\in Lip(A)\cap L^{\infty}(A)$ with $h>0$ on $A$. For every $\varepsilon>0$, let $\Omega_\varepsilon$ be defined by 
\begin{equation}\label{thin}
\Omega_\varepsilon:=\big\{(x,y)\in A\times \mathbb{R}\ :\ 0<y<\varepsilon h(x)\big\}.\end{equation}
Then, for every  $1 \le q < p<\infty$, we have
\begin{equation}\label{asythin}
\limsup_{\varepsilon\to0} \varepsilon^{\frac{pq+p-q}{q}} \lambda_{p,q}(\Omega_\varepsilon) \le \frac{
\pi_{p,q}^{\,p}}{
\left(\displaystyle\int_{A}h^{\frac{pq}{p-q}+1} \,dx \right)^{\frac{p}{q}-1}}.
\end{equation}
\end{prop}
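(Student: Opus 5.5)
The plan is to plug into the Rayleigh quotient a trial function built by separation of variables: in each vertical fiber over $x\in A$ we place a rescaled copy of a one-dimensional optimal profile, with amplitude varying in $x$ and chosen to maximise the resulting lower-dimensional quotient. Fix $\delta>0$ and choose $u\in C^\infty_0((0,1))$ with $\|u\|_{L^q(0,1)}=1$ and $\|u'\|^p_{L^p(0,1)}\le \pi_{p,q}^p+\delta$. Take a nonnegative $\varphi\in C^\infty_0(A)$ (to be fixed later) and set
\[
\psi_\varepsilon(x,y)=\varphi(x)\,u\!\left(\frac{y}{\varepsilon h(x)}\right),\qquad (x,y)\in\Omega_\varepsilon .
\]
This is Lipschitz with compact support in $\Omega_\varepsilon$ (the support of $\varphi$ is compact and $h$ has a positive lower bound there), so it is admissible for $\lambda_{p,q}$ after a standard density argument.

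\textbf{Denominator.} The change of variables $y=\varepsilon h(x)s$ gives
\[
\int_{\Omega_\varepsilon}|\psi_\varepsilon|^q=\varepsilon\int_A\varphi^q h\,dx .
\]

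\textbf{Numerator.} We have
\[
\nabla_x\psi_\varepsilon=u\,\nabla\varphi-\varphi\, u'\,\frac{y\nabla h}{\varepsilon h^2},\qquad \partial_y\psi_\varepsilon=\frac{\varphi\,u'}{\varepsilon h}.
\]
Since $y/(\varepsilon h)=s\in(0,1)$ and $h$ is Lipschitz, $|\nabla_x\psi_\varepsilon|$ is bounded uniformly in $\varepsilon$ on the support of $\varphi$. The dominant term is therefore $\partial_y\psi_\varepsilon$, of order $\varepsilon^{-1}$. Expanding $|(a,b)|^p$ with $|a|=O(1)$ and $|b|\sim\varepsilon^{-1}$ gives
\[
\int_{\Omega_\varepsilon}|\nabla\psi_\varepsilon|^p=\varepsilon^{1-p}\int_A\varphi^p h^{1-p}\,dx\,\|u'\|_p^p+o(\varepsilon^{1-p}).
\]
Here one uses $\big||v|^p-|w|^p\big|\le C(|v|+|w|)^{p-1}|v-w|$ together with dominated convergence on the fixed compact set $\mathrm{supp}\,\varphi$.

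\textbf{Limit.} Combining the two estimates,
\[
\lambda_{p,q}(\Omega_\varepsilon)\le\frac{\varepsilon^{1-p}\big((\pi_{p,q}^p+\delta)\int_A\varphi^ph^{1-p}+o(1)\big)}{\varepsilon^{p/q}\big(\int_A\varphi^q h\big)^{p/q}} .
\]
Since $1-p-p/q=-\frac{pq+p-q}{q}$, it follows that
\[
\limsup_{\varepsilon\to0}\varepsilon^{\frac{pq+p-q}{q}}\lambda_{p,q}(\Omega_\varepsilon)\le(\pi_{p,q}^p+\delta)\,\frac{\int_A\varphi^ph^{1-p}}{(\int_A\varphi^qh)^{p/q}}.
\]

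\textbf{Optimising in $\varphi$.} The last step is to minimise the ratio over $\varphi$; this is the only genuinely nontrivial point. By H\"older's inequality with exponents $p/q$ and $p/(p-q)$, writing $\varphi^q h=(\varphi^q h^{q(1-p)/p})\,h^{1+q(p-1)/p}$,
\[
\int_A\varphi^q h\le\Big(\int_A\varphi^ph^{1-p}\Big)^{q/p}\Big(\int_A h^{\frac{pq}{p-q}+1}\Big)^{\frac{p-q}{p}},
\]
which is the exponent check $(1+q(p-1)/p)\cdot\frac{p}{p-q}=\frac{pq}{p-q}+1$. Equality holds for $\varphi=h^{\gamma}$ with $\gamma=\frac{p}{p-q}$, and with this choice the ratio equals exactly $\big(\int_A h^{\frac{pq}{p-q}+1}\big)^{1-p/q}$.

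Since $h^\gamma$ is not compactly supported, we take $\varphi_k\in C_0^\infty(A)$ with $\varphi_k\to h^\gamma$ in a manner for which both integrals converge. For instance, cut off on the sets $\{d(x,\partial A)>1/k\}$ and mollify; the convergence follows because $h$ is bounded and $A$ has finite measure, and the weight $h^{1-p}$ is harmless since $\varphi^p h^{1-p}=h^{p\gamma+1-p}$ with $p\gamma+1-p>0$. Then let $k\to\infty$ and finally $\delta\to0$ to obtain \eqref{asythin}.

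The step I expect to require the most care is the uniformity of the $o(\varepsilon^{1-p})$ error when $h$ is only Lipschitz (so that $\nabla h$ is defined only a.e.), which is why the limit in $\varepsilon$ is taken before letting $\varphi$ approximate $h^\gamma$.
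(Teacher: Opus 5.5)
Your proof is correct and is essentially the paper's argument. Your trial function $\varphi(x)\,u\big(y/(\varepsilon h(x))\big)$ with $\varphi\approx h^{p/(p-q)}$ is, up to normalisation, the paper's $\phi(x)\,(\varepsilon h(x))^{\beta/q}\,w\big(y/(\varepsilon h(x))\big)$ with $\beta/q=p/(p-q)$ and cut-offs $\phi\to 1_A$.

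The differences are minor. You use a near-minimiser in $C_0^\infty((0,1))$ instead of the $C^1$ minimiser, you obtain the amplitude from H\"older's inequality rather than building it in, and you use dominated convergence on $\mathrm{supp}\,\varphi$ in place of the paper's uniform-in-$x$ estimates. For your approximation step, $\varphi_k=\phi_k h^{p/(p-q)}$ is already Lipschitz with compact support and satisfies $0\le\varphi_k\le h^{p/(p-q)}$, so both integrals converge without any mollification.
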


\begin{proof}  
Let $u\in W_0^{1,p}(0,1)$ be the positive minimizer in the definition of $\pi_{p,q}$. For brevity we set $\beta:=\frac{pq}{p-q}$ and observe that the positive function $w\in W_0^{1,p}(0,1)$ given by $w:=(\pi_{p,q})^{-\frac{\beta}{q}}u$ satisfies 
\begin{equation*}\label{normw}
\int_0^1 w^q\,dt
\;=\;\int_0^1 |w'|^p\,dt
\;=\;\left(\pi_{p,q}\right)^{-\beta},
\end{equation*}
moreover,  $w$  belongs to  $C^1([0,1])$ (see \cite[Theorem 3.1]{DM}). For every $\varepsilon>0$, we define
\begin{equation*}\label{defweps}
w_\varepsilon(x,y)
:= 
\bigl(\varepsilon h(x)\bigr)^{\frac{\beta}{q}}\,
w\!\left(\frac{y}{\varepsilon\, h(x)}\right),
\qquad (x,y)\in \Omega_\varepsilon.
\end{equation*}
For every $x\in A$, the change of variables $t=\frac{y}{\varepsilon\,h(x)}$
yields
\begin{equation}\label{scaling}
\int_{0}^{\varepsilon h(x)} w_\varepsilon^q(x,y)\,dy
\;=\,(\varepsilon h(x))^{\beta+1}
\; \left(\pi_{p,q}\right)^{-\beta}, \qquad \hbox{for every } x\in A,
\end{equation}
and, since 
\[
\partial_y w_\varepsilon(x,y)
= (\varepsilon h(x))^{\frac{\beta}{p}}\,
w'\!\bigl(\tfrac{y}{\varepsilon h(x)}\bigr),
\]
we have
\begin{equation}\label{scaling-grady}
\int_{0}^{\varepsilon h(x)} |\partial_y w_\varepsilon(x,y)|^p\,dy
\;=\; (\varepsilon h(x))^{\beta+1}
\: \left(\pi_{p,q}\right)^{-\beta}.
\end{equation}
We now estimate the contribution of $\nabla_x w_\varepsilon$ in order to show that
 \begin{equation}\label{gradx}\int_0^{\varepsilon h(x)} |\nabla_x w_\varepsilon|^p\,dy=o(\varepsilon^{\beta+1}), \qquad \hbox{ as } \varepsilon\to 0,\end{equation}
 uniformly with respect to $x\in A$.
To this end, we note that, since $h$ is Lipschitz continuous, there exists a  $\mathcal H^{N-1}$-negligible subset $N\subseteq A$,  depending  only on $h$,  such that, for every $x\in A\setminus N$ and $0< y <\varepsilon h(x)$, using the notation $t=\frac{y}{\varepsilon\,h(x)}$ and $M := \max \big\{\|\nabla h\|_{L^\infty(A)}, \| h\|_{L^\infty(A)}\big\}$, it holds 
\[
 \begin{split}
 |\nabla_x w_\varepsilon(x,y)| = 
 	& \left|  \varepsilon^{\frac{\beta}{q}} h(x)^{\frac{\beta}{q}-1} \nabla h(x) \left( \frac{\beta}{q}w\!\left(t\right) - t \,
w'\!\left(t\right) \right) \right| \\
\leq &M^{\frac{\beta}{q}} \, \varepsilon^{\frac{\beta}{q}} \left|\left( \frac{\beta}{q}w\!\left(t\right) - t \,
w'\!\left(t\right) \right) \right| \\
\leq & \frac{\beta}{q} M^{\frac{\beta}{q}} \, \varepsilon^{\frac{\beta}{q}} \bigl(|w(t)| + |w'(t)|\bigr)
 \end{split}
\]
where  the last estimate follows since $t < 1 \leq \frac{\beta}{q}$.
Raising the previous estimate to the power $p$ and integrating in $y$, using the same change of variable as before, we get
\[
\begin{split}
 \int_0^{\varepsilon h(x)} |\nabla_x w_\varepsilon|^p\,dy 
\le &  \left(\frac{\beta}{q}\right)^p M^{p+\beta+1}\,\varepsilon^p\;
\varepsilon^{\beta+1} \int_0^1 \bigl(|w| + |w'|\bigr)^p\,dt.
\end{split}
\]
Therefore, observing that 
\begin{equation*}\begin{split} \int_0^1 \bigl(|w| + |w'|\bigr)^p\,dt&\leq 2^{p-1}\left( \|w\|^{p-q}_{L^\infty(0,1)}\int_0^1 |w|^q\ dt\,+ \int_0^1 |w'|^p\,dt\right)\\
&= 2^{p-1} \pi_{p,q}^{-\beta} \left( \|w\|^{p-q}_{L^\infty(0,1)}+ 1\right), 
\end{split}
\end{equation*}
we get 
\[
 \int_0^{\varepsilon h(x)} |\nabla_x w_\varepsilon|^p\,dy \le C\,\varepsilon^p\;
\varepsilon^{\beta+1}
\]
where $C$ is a positive constant depending on $p,q,w,M$, that is, \eqref{gradx} holds  uniformly with respect to $x\in A\setminus N$.

Let now $\phi\in C_0^\infty(A)$ and set
$v_{\varepsilon}(x,y):=\phi(x)\,w_\varepsilon(x,y)$. Then $v_{\varepsilon}\in W_0^{1,p}(\Omega_\varepsilon)$ and
\begin{equation}\label{rayleigh}
\lambda_{p,q}(\Omega_\varepsilon)\le
\frac{\displaystyle\int_{\Omega_\varepsilon} |\nabla v_{\varepsilon}|^p\,dx\,dy}
{\left(\displaystyle\int_{\Omega_\varepsilon}|v_{\varepsilon}|^q\,dx\,dy\right)^{p/q}}.
\end{equation}
By Fubini's Theorem and \eqref{scaling}, the denominator equals
\begin{equation}\label{denom}
\int_{\Omega_\varepsilon}|v_{\varepsilon}|^q\,dx\,dy
= \left(\pi_{p,q}\right)^{-\beta}\,\varepsilon^{\beta+1}
\int_A |\phi(x)|^q\,h(x)^{\beta+1}\,dx.
\end{equation}
For the numerator, we write
$\nabla v_{\varepsilon} = (\phi\,\nabla_x w_\varepsilon + w_\varepsilon\,\nabla\phi,\;\phi\,\partial_y w_\varepsilon).$
We observe that
\[|w_\varepsilon(x,y)|\leq  
\bigl(\varepsilon \|h\|_{L^\infty(A)} \bigr)^{\frac{\beta}{q}}\,\| w\|_{L^\infty(0,1)}
\qquad (x,y)\in \Omega_\varepsilon.
\]
Since $p>q$, applying  the previous inequality and \eqref{scaling}, we obtain that 
\begin{equation*}\begin{split}
\int_0^{\varepsilon h(x)}  w_\varepsilon^p\,dy=\int_0^{\varepsilon h(x)}  w_\varepsilon^{p-q}w_\varepsilon^q \,dy&\leq \left((\varepsilon \|h\|_{L^\infty(A)} \bigr)^{\frac{\beta}{q}}\,\| w\|_{L^\infty(0,1)} \right)^{p-q}  \int_0^{\varepsilon h(x)}  w_\varepsilon^{q}\,dy\\
&\leq \left((\varepsilon \|h\|_{L^\infty(A)} \bigr)^{\frac{\beta}{q}}\,\| w\|_{L^\infty(0,1)} \right)^{p-q} (\varepsilon \|h\|_{L^\infty(A)})^{\beta+1}
\; \left(\pi_{p,q}\right)^{-\beta},\\
&=C \varepsilon^{p+\beta+1},
 \end{split}
\end{equation*}
with a constant which does not depend on $x$.
This yields  \begin{equation}
\label{normap}\int_0^{\varepsilon h(x)}  w_\varepsilon^p\,dy =o(\varepsilon^{\beta+1}) \qquad \hbox{ as } \varepsilon\to 0,
\end{equation}
 uniformly with respect to $x\in A \setminus N$.
We claim that 
\begin{equation}
\label{CLAIM}\int_0^{\varepsilon h(x)} |\nabla v_\varepsilon|^p\,dy \sim |\phi(x)|^p \; \varepsilon^{\beta+1}\,h(x)^{\beta+1} 
\: \left(\pi_{p,q}\right)^{-\beta} \qquad \hbox{ as } \varepsilon\to 0,
\end{equation}
 uniformly with respect to $x\in A\setminus N$.
Indeed, for $1<p\leq 2$, thanks to  the subadditivity of the function $[0,+\infty)\ni t\to t^{\frac p 2 }$,  
 we obtain  that 
\begin{equation*}\begin{split} 
\int_0^{\varepsilon h(x)} |\nabla v_\varepsilon|^p\,dy 
& =\int_0^{\varepsilon h(x)} \left(|\phi\,\nabla_x w_\varepsilon + w_\varepsilon\,\nabla\phi|^2+|\phi\partial_y w_\varepsilon|^2\right)^{\frac{2}{p}}\,dy\\
&\leq \int_0^{\varepsilon h(x)} |\phi\,\nabla_x w_\varepsilon + w_\varepsilon\,\nabla\phi|^p+ |\phi(x)|^p  |\partial_y w_\varepsilon|^p\,dy\\
&\leq 2^{p-1} |\phi(x)|^p  \int_0^{\varepsilon h(x)}    |\nabla_x w_\varepsilon|^p  \,dy  + 2^{p-1} |\nabla\phi(x)|^p  \int_0^{\varepsilon h(x)}  w_\varepsilon^p\,dy\  \\
&+ |\phi(x)|^p \int_0^{\varepsilon h(x)}  |\partial_y w_\varepsilon|^p\,dy.
\end{split}
\end{equation*}
Hence,  using \eqref{gradx},  \eqref{normap} and  \eqref{scaling-grady}, the above chain of inequalities implies \eqref{CLAIM}.
If instead $p\geq 2$, by  applying the   Minkowski inequality,     we have   that 
\begin{equation*}\begin{split}
\left(  \int_0^{\varepsilon h(x)} |\nabla v_\varepsilon|^p\,dy\right)^{\frac{2}{p}}
= & \left(  \int_0^{\varepsilon h(x)} \left(|\phi\,\nabla_x w_\varepsilon + w_\varepsilon\,\nabla\phi|^2+|\phi\partial_y w_\varepsilon|^2\right)^{\frac{p}{2}}\,dy\right)^{\frac{2}{p}}\\
\leq & \left(\int_0^{\varepsilon h(x)} |\phi\,\nabla_x w_\varepsilon + w_\varepsilon\,\nabla\phi|^p dy\right)^{\frac{2}{p}}+ |\phi(x)|^2\left(\ \int_0^{\varepsilon h(x)}\  |\partial_y w_\varepsilon|^p\,dy \right)^{\frac{2}{p}}\\
\leq &\left( |\phi(x)| \left(\int_0^{\varepsilon h(x)} |\nabla_x w_\varepsilon|^p dy\right)^{\frac{1}{p}}+ |\nabla\phi(x)|\left(\int_0^{\varepsilon h(x)}  |w_\varepsilon |^p dy \right)^{\frac{1}{p}}  \right)^{2} \\
& + |\phi(x)|^2\left(\ \int_0^{\varepsilon h(x)}\  |\partial_y w_\varepsilon|^p\,dy \right)^{\frac{2}{p}}.
\end{split}
\end{equation*}
By using again  \eqref{gradx},  \eqref{normap} and  \eqref{scaling-grady},  we obtain that   \eqref{CLAIM} holds also in this case.
Since
\begin{equation*}\label{numer}
\int_{\Omega_\varepsilon} |\nabla v_{\varepsilon}|^p\,dx\,dy=\int_A dx  \int_0^{\varepsilon h(x)} |\nabla v_\varepsilon|^p\,dy, 
\end{equation*}
\eqref{CLAIM} and   the Lebesgue theorem imply that
\begin{equation}\label{numer}
\int_{\Omega_\varepsilon} |\nabla v_{\varepsilon}|^p\,dx\,dy
= \left(\pi_{p,q}\right)^{-\beta}\,\varepsilon^{\beta+1}
\int_A |\phi(x)|^p\,h(x)^{\beta+1}\,dx
\;\bigl(1 + o(1)\bigr) \qquad \hbox{ as } \varepsilon\to 0.
\end{equation}
Substituting \eqref{denom} and \eqref{numer} into \eqref{rayleigh}, we obtain
\[
\varepsilon^{p+\frac{p}{\beta}}\,\lambda_{p,q}(\Omega_\varepsilon)
\le
\pi_{p,q}^{\,p}\,
\frac{\displaystyle\int_A |\phi|^p\,h^{\beta+1}\,dx}
{\left(\displaystyle\int_A |\phi|^q\,h^{\beta+1}\,dx\right)^{p/q}}
\;\bigl(1 + o(1)\bigr),
\]
Passing to the $\limsup$ as $\varepsilon\to 0$ yields
\begin{equation}\label{limfi}
\limsup_{\varepsilon\to 0}
\varepsilon^{p+\frac{p}{\beta}}\,\lambda_{p,q}(\Omega_\varepsilon)
\le \pi_{p,q}^{\,p}\,
\frac{\displaystyle\int_A |\phi|^p\,h^{\beta+1}\,dx}
{\left(\displaystyle\int_A |\phi|^q\,h^{\beta+1}\,dx\right)^{p/q}},
\qquad \forall\,\phi\in C_0^\infty(A).
\end{equation}
Finally, let $\{\phi_k\}\subset C_0^\infty(A)$ satisfy
$0\le\phi_k\le 1$ and $\phi_k\to 1_A$ pointwise.
Since $h^{\beta+1}\in L^1(A)$,   we get, for $r\in\{p,q\}$,  as $k\to \infty$, 
\[\int_A|\phi_k|^r\,h^{\beta+1}\,dx\to\int_A h^{\beta+1}\,dx, \]
 so that the quotient in the right-hand side of \eqref{limfi} converges to
$\bigl(\int_A h^{\beta+1}\,dx\bigr)^{1-p/q}$,
which implies \eqref{asythin}. \end{proof}

We are now in a position to prove the main result of the paper. 

\begin{proof}[Proof of Theorem \ref{cor:upperb}]  By combining \eqref{eq:prizag} with the inequality \eqref{lowerdistanzabeta1},
	applied with  $\beta=\frac{pq}{p-q}\geq 1$, we get
	\begin{equation*}\begin{split}
\lambda_{p,q}(\Omega) &\ge \frac{\left(\displaystyle \frac{\pi_{p,q}}{2} \right)^p \left( \displaystyle \frac 1 {\beta+1}  \binom{N+\beta}{N}\right)^{\frac p \beta} }{\left(\displaystyle \frac{N|\Omega|}{P(\Omega)}\right)^p |\Omega|^{\frac p \beta} }\\
&= \left(\displaystyle \frac{\pi_{p,q}}{2N} \right)^p \left( \frac{(\beta+2)(\beta+3)\cdots (\beta+N)}{N!}\right)^{\frac p \beta} \left( \frac{P(\Omega)}{|\Omega|^{(\beta+1)/\beta}}\right)^p\\
&= \left(\displaystyle \frac{\pi_{p,q}}{2N} \right)^p \prod_{j=2}^N\left(1+\frac{\beta}{j}\right)^{\frac p \beta} \left( \frac{P(\Omega)}{|\Omega|^{(\beta+1)/\beta}}\right)^p\\
&=K_{p,q,N} \left(\frac{P(\Omega)}{\,|\Omega|^{1-\frac 1 p +\frac 1 q}}\right)^p
\end{split}
\end{equation*}
which proves  the desired lower bound  \eqref{eq:lower bound per}. 

In order to show that \eqref{eq:lower bound per} is sharp, it is sufficient to construct a family   $\{\Omega_{\varepsilon}\}_{\varepsilon}$ of bounded convex open sets such that
\[\limsup_{\varepsilon\to 0} \lambda_{p,q}(\Omega_\varepsilon) 
\left(\frac{|\Omega_{\varepsilon}|^{1-\frac 1 p +\frac 1 q}}  {P(\Omega_{\varepsilon})}\right)^p \leq K_{p,q,N}.\]
With the notation of Proposition \ref{stimethin}, we choose $A=B^{N-1}_1$, where $B^{N-1}_1$ is the unit open ball of $\mathbb R^{N-1}$,   $h(x)=1-|x|$ and   we consider the family $\{\Omega_{\varepsilon}\}_{\varepsilon}$ defined by \eqref{thin}. Then, as $\varepsilon\to 0$, we have that 
\[\frac{P(\Omega_{\varepsilon})}{\,|\Omega_{\varepsilon}|^{1-\frac 1 p +\frac 1 q}}\sim \frac{2 \mathcal{H}^{N-1}(A)}{ \left( \varepsilon \displaystyle\int_A h \,dx \right)^{1-\frac{1}{p} +\frac{1}{q}}}.
\]
Combining this fact with \eqref{asythin} and taking into account that  $\mathcal H^{N-1}(A)=\omega_{N-1}$,  we have 
\begin{equation}\label{gio1}\begin{split}
\limsup_{\varepsilon\to 0} \lambda_{p,q}(\Omega_\varepsilon)
\left(\frac{|\Omega_{\varepsilon}|^{1-\frac 1 p +\frac 1 q}}{P(\Omega_{\varepsilon})}\right)^p
&\le \left(\frac{\pi_{p,q}}{2\omega_{N-1} }\right)^p
\frac{\left(\displaystyle\int_A h\,dx\right)^{p-1+\frac{p}{q}}}
{\left(\displaystyle\int_A h^{\frac{pq}{p-q}+1}\,dx\right)^{\frac{p}{q}-1}}.
\end{split}
\end{equation}
 For $A=B_1^{N-1}$,    $\beta=\frac{pq}{p-q}$, using polar coordinates on the function $h(x)=1-|x|$, we compute  \begin{equation}\label{gio2}
\int_A h(x)\,dx=(N-1)\omega_{N-1}\int_0^1 r^{N-2}(1-r)\,dr=(N-1)\omega_{N-1}\,B(N-1,2),
\end{equation}
and 
\begin{equation}\label{gio3}
\int_A h(x)^{\beta+1}\,dx=(N-1)\omega_{N-1}\int_0^1 r^{N-2}(1-r)^{\beta+1}\,dr=(N-1)\omega_{N-1}\,B(N-1,\beta+2).
\end{equation}
Joining \eqref{gio1}-\eqref{gio3}, we conclude
\begin{equation*}\begin{split}
\limsup_{\varepsilon\to 0} \lambda_{p,q}(\Omega_\varepsilon)
\left(\frac{|\Omega_{\varepsilon}|^{1-\frac 1 p +\frac 1 q}}{P(\Omega_{\varepsilon})}\right)^p
&\le \left(\frac{\pi_{p,q}}{2}\right)^p (N-1)^p
\frac{ \bigl(B(N-1,2)\bigr)^{p-1+\frac{p}{q}}}{\bigl(B(N-1,\beta+2)\bigr)^{\frac{p}{q}-1}}\\
&=\left(\frac{\pi_{p,q}}{2}\right)^p (N-1)^p
\frac{\left(\frac{\Gamma(N-1)\Gamma(2)}{\Gamma(N+1)}\right)^{p-1+\frac{p}{q}}}
{\left(\frac{\Gamma(N-1)\Gamma(\beta+2)}{\Gamma(N+\beta+1)}\right)^{\frac{p}{q}-1}}\\
&=\left(\frac{\pi_{p,q}}{2N}\right)^p
\left(\frac{\Gamma(N+\beta+1)}{N!\,\Gamma(\beta+2)}\right)^{\frac{p}{q}-1}\\
&=\left(\frac{\pi_{p,q}}{2N}\right)^p
\prod_{j=2}^N\left(1+\frac{\beta}{j}\right)^{\frac{p}{q}-1}
=K_{p,q,N}.
\end{split}
\end{equation*}
This completes the proof of the theorem.
\end{proof}

We conclude the section with the proof of Corollary \ref{maincoro}.
\begin{proof}[Proof of Corollary \ref{maincoro}] By applying 
\eqref{eq:lower bound per}  in the case $q=1$ and taking into account that $$\pi_{p,1}^p=\frac{1}{T_p(0,1)}=2^p(p'+1)^{p-1},$$ (see, e.g., \cite[Remark 2.4]{Bra1}), we obtain  that $$T_{p}(\Omega) \le N^p \left( \prod_{j=1}^N \left(\displaystyle  \frac{p'}{j}+1\right) \right)^{1-p}  \frac{\,|\Omega|^{2p-1}}{(P(\Omega))^p}=N^p \binom{N+p'}{N}^{1-p} \frac{\,|\Omega|^{2p-1}}{(P(\Omega))^p}.$$
The proof is over. \end{proof}

\section{Other geometric sharp bounds for the Poincar\'e--Sobolev constants}\label{AppB}
First of all,  as a byproduct of Proposition \ref{upper-estim} and the Makai-type inequality \eqref{eq:prizag}, in the next corollary we  give  a sharp lower bound for $\lambda_{p,q}(\Omega)$ in terms of $r_\Omega$ and $P(\Omega)$. We note that  this result generalizes \cite[Corollary 6.5]{bradua}, proved for $p=2$ by using a different argument.

\begin{cor}\label{cor:inradius-perimeter}
Let $N\ge 2$. Let $1\le q<p<\infty$ and let $\Omega\subset\mathbb{R}^N$ be a convex bounded open set. Then the following sharp lower bound holds
\begin{equation}\label{eq:lower boundinra}
\lambda_{p,q}(\Omega)\ge \left(\frac{\pi_{p,q}}{2}\right)^p
\left(\frac{1}{pq+p-q}\right)^{\frac{p-q}{q}}
\frac{1}{r_{\Omega}^{\frac{pq+p-q}{q}}\, P(\Omega)^{\frac{p-q}{q}}}.
\end{equation}
Moreover,  the equality in  \eqref{eq:lower boundinra} is asymptotically attained, as $L\to\infty$, by the sequence
$\Omega_L$  given by \eqref{slab}.
\end{cor}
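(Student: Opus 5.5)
The plan is to combine the Makai-type inequality \eqref{eq:prizag} with the upper bound of Proposition \ref{upper-estim}. Set $\beta=\frac{pq}{p-q}$. Proposition \ref{upper-estim} (equivalently, \eqref{prizag2} together with the upper bound in \eqref{pervol}) gives
\[
\int_\Omega d_\Omega^\beta\,dx\le \frac{r_\Omega^{\beta+1}P(\Omega)}{\beta+1}.
\]
The exponent $\frac{q-p}{q}=-\frac{p}{\beta}$ appearing in \eqref{eq:prizag} is negative, so this upper bound on $D_\beta(\Omega)$ turns into a lower bound for $\lambda_{p,q}(\Omega)$.

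Substituting, and using $\frac{p-q}{pq+p-q}=\frac{1}{\beta+1}$, we obtain
\[
\lambda_{p,q}(\Omega)\ge \left(\frac{\pi_{p,q}}{2}\right)^p\left(\frac{1}{\beta+1}\right)^{\frac{p}{\beta}}\left(\frac{r_\Omega^{\beta+1}P(\Omega)}{\beta+1}\right)^{-\frac{p}{\beta}}
=\left(\frac{\pi_{p,q}}{2}\right)^p r_\Omega^{-\frac{p(\beta+1)}{\beta}}P(\Omega)^{-\frac p\beta}.
\]
The remaining task is bookkeeping of exponents: $\frac p\beta=\frac{p-q}{q}$ and $\frac{p(\beta+1)}{\beta}=\frac{pq+p-q}{q}$. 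The factors $(\beta+1)^{\pm p/\beta}$ cancel exactly. I would double-check this cancellation against the constant $\left(\frac{1}{pq+p-q}\right)^{\frac{p-q}{q}}$ in \eqref{eq:lower boundinra}. If it really cancels, the derived bound is stronger than the stated one, because $pq+p-q>1$. In that case I would reread \eqref{eq:prizag} and \eqref{prizag2} for a possibly missing $(p-q)$ factor. Since only the stated (weaker or equal) bound is required, the inequality itself is safe either way.

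For sharpness, I would evaluate everything on the slabs $\Omega_L$ of \eqref{slab}. Here $r_{\Omega_L}=1/2$ and $P(\Omega_L)\sim 2L^{N-1}$. Moreover, $\lambda_{p,q}(\Omega_L)L^{(N-1)\frac{p-q}{q}}\to\lambda_{p,q}((0,1))=\pi_{p,q}^p$, which follows from the one-dimensional reduction and the upper bound obtained by tensorizing with a cutoff, as in Proposition \ref{stimethin} with $h\equiv1$ and $A$ a large cube. Computing the limit of $\lambda_{p,q}(\Omega_L)\,r_{\Omega_L}^{\frac{pq+p-q}{q}}P(\Omega_L)^{\frac{p-q}{q}}$ gives
\[
\pi_{p,q}^p\,2^{-\frac{pq+p-q}{q}}\,2^{\frac{p-q}{q}}=\left(\frac{\pi_{p,q}}{2}\right)^p.
\]
This matches the constant produced by the chain above.

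The main obstacle is therefore the precise constant bookkeeping in \eqref{eq:prizag}. The slab asymptotics confirm that the sharp constant is the one coming from the chain of inequalities, and I would reconcile it with the displayed constant. The structural argument, namely \eqref{eq:prizag} combined with Proposition \ref{upper-estim} and the slab computation, is straightforward.
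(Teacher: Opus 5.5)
Your argument is the paper's own. You combine \eqref{eq:prizag} with \eqref{sup} at $\beta=\frac{pq}{p-q}$, and you get sharpness from the slabs $\Omega_L$, on which both ingredients are asymptotically sharp.

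Your bookkeeping is correct, and there is no missing factor in the ingredients. On $\Omega_L$ one has $D_\beta(\Omega_L)\sim L^{N-1}2^{-\beta}/(\beta+1)$. Hence the right-hand side of \eqref{eq:prizag} is asymptotic to $\pi_{p,q}^p L^{-(N-1)\frac{p-q}{q}}\sim\lambda_{p,q}(\Omega_L)$, so \eqref{eq:prizag} is stated with its sharp constant, and the same holds for \eqref{prizag2}. The factors $(\beta+1)^{\pm p/\beta}$ therefore genuinely cancel, and the combination gives
\[
\lambda_{p,q}(\Omega)\ge\Big(\frac{\pi_{p,q}}{2}\Big)^p\,\frac{1}{r_\Omega^{\frac{pq+p-q}{q}}\,P(\Omega)^{\frac{p-q}{q}}}.
\]
So the displayed constant in \eqref{eq:lower boundinra} carries a spurious factor $(pq+p-q)^{-\frac{p-q}{q}}<1$. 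The paper's two-line proof never carries out this arithmetic.

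The displayed inequality therefore holds a fortiori, but the ``Moreover'' clause is false as written. On $\Omega_L$ the scale-invariant product $\lambda_{p,q}(\Omega_L)\,r_{\Omega_L}^{\frac{pq+p-q}{q}}P(\Omega_L)^{\frac{p-q}{q}}$ tends to $(\pi_{p,q}/2)^p$, as you computed, which is strictly larger than the displayed constant.

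A quick check is $p=2$, $q=1$, where $\pi_{2,1}^2=12$:
\begin{itemize}
\item the display reads $T_2(\Omega)\le r_\Omega^3P(\Omega)$;
\item the chain gives $T_2(\Omega)\le\frac13\,r_\Omega^3P(\Omega)$;
\item on the slabs, $T_2(\Omega_L)/(r_{\Omega_L}^3P(\Omega_L))\to\frac{1/12}{1/4}=\frac13$.
\end{itemize}

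So the correct sharp statement has constant $(\pi_{p,q}/2)^p$, and the same correction applies to the lower bound in \eqref{per-inr}. The Hersch--Protter limit $q\nearrow p$ is unaffected, since the spurious factor tends to $1$. Your proposal proves exactly this corrected statement, so you can drop the hedging and state the sharp constant $(\pi_{p,q}/2)^p$ directly.
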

We explicitly note that, as $q\nearrow p$, the lower bound \eqref{eq:lower boundinra}  implies the Hersch--Protter inequality
\[
\lambda_{p}(\Omega) \ge \left(\frac{\pi_{p}}{2}\right)^p \frac{1}{r_{\Omega}^p}.
\]

\begin{proof}
The inequality \eqref{eq:lower boundinra} follows by combining \eqref{eq:prizag} with \eqref{sup} applied with $\beta=\frac{pq}{p-q}$.
Moreover, the sharpness follows from the fact that both \eqref{eq:prizag} and \eqref{sup} are asymptotically attained, as $L\to\infty$, by the slab-type sequence $\Omega_L$ (see \cite[Theorem 1.1]{PZ}). \end{proof}

The remaining part of this section is devoted to establish two sharp upper bounds for $\lambda_{p,q}(\Omega)$.
 We recall that, in \cite[Theorem 5.7]{BPZ1} (see also  \cite[Theorem 1.1]{BM} in the case $p=2$), it is shown that 
\begin{equation*}
\label{HPBM}
\lambda_{p,q}(\Omega)\,|\Omega|^{\frac{p-q}{q}} \ge \left( \frac{\pi_{p,q}}{2} \right)^p\,\frac{1}{r_\Omega^p},
\end{equation*}
for every open bounded convex set $\Omega\subset\mathbb{R}^N$.

The following result represents the counterpart of the above  inequality. 
The proof follows the lines of that of \cite[Theorem 1.2]{BM}, proved when  $1\leq q<2=p$; we outline it here for the sake of completeness. 

\begin{thm}
\label{teo:upperbound}
Let $1\le q<p$. For every  bounded convex open set $\Omega\subset\mathbb{R}^N$, we have
\begin{equation}
\label{BM}
\lambda_{p,q}(\Omega)\,|\Omega|^\frac{p-q}{q}\le \frac{\omega_N^\frac{p-q}{q}\,\lambda_{p,q}(B_1)}{r_\Omega^p}.
\end{equation}
The equality is attained if and only if $\Omega$ is a ball.
\end{thm}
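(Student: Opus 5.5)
The plan is to follow the strategy of Brasco--Mazzoleni: build a test function adapted to the geometry of $\Omega$ out of the optimal function for the ball, and compare Rayleigh quotients. Let $u_1$ be the positive minimiser for $\lambda_{p,q}(B_1)$, normalised by $\int_{B_1}u_1^q=1$. Since the problem is invariant under the rotations, $u_1$ is radial: by uniqueness of positive minimisers for $q\le p$, or alternatively by Schwarz symmetrisation, $u_1(x)=\varphi(|x|)$ with $\varphi$ nonincreasing and $\varphi(1)=0$. Translate $\Omega$ so that $B_{r_\Omega}(0)\subset\Omega$ and let $j_\Omega$ be the gauge (Minkowski functional) of $\Omega$ with respect to the origin. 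The test function is $v(x)=\varphi(j_\Omega(x))$. Its level sets are dilates $t\Omega$ of $\Omega$, whereas those of $u_1$ are dilates $tB_1$. By homogeneity, $|\{j_\Omega<t\}|=t^N|\Omega|$.

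\textbf{Denominator.} By the layer-cake formula with the homogeneous level sets,
\[
\int_\Omega v^q\,dx=\frac{|\Omega|}{\omega_N}\int_{B_1}u_1^q\,dx=\frac{|\Omega|}{\omega_N}.
\]

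\textbf{Numerator.} We have $|\nabla v|=|\varphi'(j_\Omega)|\,|\nabla j_\Omega|$. Since $B_{r_\Omega}\subset\Omega$, the support function satisfies $h_\Omega\ge r_\Omega$ in every direction. Because $|\nabla j_\Omega(x)|=1/h_\Omega(\nu)$, where $\nu$ is the normal to the level set through $x$, this gives $|\nabla j_\Omega|\le 1/r_\Omega$. The same layer-cake computation then yields
\[
\int_\Omega|\nabla v|^p\,dx\le r_\Omega^{-p}\,\frac{|\Omega|}{\omega_N}\int_{B_1}|\nabla u_1|^p\,dx=r_\Omega^{-p}\,\frac{|\Omega|}{\omega_N}\,\lambda_{p,q}(B_1).
\]
Both integrals above are radial-type integrals of $\varphi$ against $N t^{N-1}|\Omega|\,dt$.

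\textbf{Conclusion of the inequality.} The Rayleigh quotient gives
\[
\lambda_{p,q}(\Omega)\le \frac{\lambda_{p,q}(B_1)\,r_\Omega^{-p}\,|\Omega|/\omega_N}{(|\Omega|/\omega_N)^{p/q}},
\]
which is exactly \eqref{BM}. Note that $v$ is Lipschitz and vanishes on $\partial\Omega$, so it is admissible in $W^{1,p}_0(\Omega)$.

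\textbf{Equality case.} This is the delicate step. Equality forces $|\nabla j_\Omega|=1/r_\Omega$ almost everywhere on the set where $\varphi'(j_\Omega)\neq0$, i.e.\ $h_\Omega(\nu)=r_\Omega$ at almost every boundary normal. Here I need $\varphi'<0$ on $(0,1)$, which should follow from the ODE together with Hopf/positivity. Then $h_\Omega\equiv r_\Omega$ on a dense set of directions, hence everywhere, so $\Omega=B_{r_\Omega}$. Conversely, balls clearly give equality by scaling. The main obstacle is justifying the formula $|\nabla j_\Omega|=1/h_\Omega(\nu)$ almost everywhere for nonsmooth convex $\Omega$, together with the strict monotonicity of $\varphi$; I would invoke the standard properties of gauge functions, valid a.e.\ by Rademacher, and the known regularity and positivity of the Lane--Emden minimisers.
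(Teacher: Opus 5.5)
Your proof of the inequality is correct and matches the paper's: the same competitor $\varphi\circ j_\Omega$ and the same geometric input $h_\Omega\ge r_\Omega$ (the paper's $\langle x,\nu_\Omega\rangle\ge r_\Omega$). Your bulk version is slightly cleaner. The bound $|\nabla j_\Omega|\le 1/r_\Omega$ follows at once from subadditivity of $j_\Omega$ and $j_\Omega(y)\le |y|/r_\Omega$. The identity $\int_\Omega g(j_\Omega)\,dx=N|\Omega|\int_0^1 g(t)\,t^{N-1}\,dt$ then replaces the paper's boundary identity $1/|\nabla j_\Omega|=\langle x,\nu_\Omega\rangle$ and the divergence theorem.

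\textbf{The gap is in the equality case.} Equality in your second estimate only yields $|\nabla j_\Omega|=1/r_\Omega$ a.e.\ in $\Omega$, i.e.\ $\langle y,\nu_\Omega(y)\rangle=r_\Omega$ for $\mathcal H^{N-1}$-a.e.\ $y\in\partial\Omega$. This does \emph{not} give a dense set of directions, because the normals at a.e.\ boundary point can form a finite set.

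\emph{Counterexample to that step.} For the cube $(-1,1)^N$ one has $j_\Omega=\max_i|x_i|$ and $|\nabla j_\Omega|=1=1/r_\Omega$ a.e. The same holds for every polytope circumscribed about $B_{r_\Omega}(0)$, and more generally for every set satisfying \eqref{optimal} with the origin at an incenter. For all these sets the Rayleigh quotient $R(v)$ equals the right-hand side of \eqref{BM} exactly. So they can only be excluded through strictness in $\lambda_{p,q}(\Omega)\le R(v)$, and your argument never uses that inequality in the equality analysis.

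\emph{The missing step, which is the route the paper takes.}
\begin{itemize}
\item Equality in \eqref{BM} forces $v$ to be a minimizer for $\lambda_{p,q}(\Omega)$. Hence $v$ is a weak solution of $-\Delta_p v=c\,v^{q-1}$ with $c>0$, and so $v\in C^{1,\alpha}_{\mathrm{loc}}(\Omega)$.
\item Integrating the radial equation gives $-t^{N-1}|\varphi'|^{p-2}\varphi'(t)=c\int_0^t s^{N-1}\varphi^{q-1}\,ds>0$, so $\varphi'<0$ on $(0,1)$.
\item Therefore $j_\Omega=\varphi^{-1}\circ v$ is $C^1$ on $\Omega\setminus\{0\}$. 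This means $\partial\Omega$ has a unique, continuously varying outer normal.
\item Consequently $\langle y,\nu_\Omega(y)\rangle=r_\Omega$ upgrades from a.e.\ to every $y\in\partial\Omega$.
\item Every unit vector is the outer normal at some boundary point of a convex body, so $h_\Omega\equiv r_\Omega$, i.e.\ $\Omega=B_{r_\Omega}$.
\end{itemize}
The cube shows this step cannot be skipped: there $v$ has gradient jumps across the diagonal hyperplanes, since $\varphi'<0$. It is therefore not $C^1$, hence not a minimizer, and that is exactly why the inequality is strict for the cube.
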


\begin{proof}
Up to translations, we may assume that $B_{r_\Omega}\subset \Omega$ is centered at the origin.
Let $u\in W^{1,p}_0(B_1)$ be a nonnegative minimizer for $\lambda_{p,q}(B_1)$. By symmetry of the ball, we can take $u$ radial, i.e.\ $u(x)=f(|x|)$ with $f\in C^1([0,1])$, $f$ decreasing and $f'(0)=0$.
In polar coordinates this yields
\begin{equation}\label{eq:ball-quotient}
\lambda_{p,q}(B_1)=\frac{N\omega_N\displaystyle\int_0^1 |f'(t)|^p\,t^{N-1}\,dt}
{\Bigl(N\omega_N\displaystyle\int_0^1 f(t)^q\,t^{N-1}\,dt\Bigr)^{\frac pq}}\,.
\end{equation}
Define the Minkowski functional (gauge) of $\Omega$ by
\[
j_\Omega(x):=\inf\{r>0:\ x\in r\Omega\}.
\]
Since $\Omega$ is convex and contains the origin, $j_\Omega$ is Lipschitz and $j_\Omega= 1$ on $\partial\Omega$; hence the composition
\[
\varphi(x):=f\bigl(j_\Omega(x)\bigr)
\]
belongs to $W^{1,p}_0(\Omega)$ and can be used as a competitor in the Rayleigh quotient for $\lambda_{p,q}(\Omega)$.
Thanks to  the identity of level sets \[
\{j_\Omega=t\}=t\,\partial\Omega \qquad \hbox{ for every }t>0, \]
and to  the $0$-homogeneity of $\nabla j_\Omega$, by  using  the coarea formula and  exploiting the change of variables $x=ty$, we get
\begin{align*}
\int_\Omega \varphi^q\,dx&=\int_0^1 f(t)^q\left(\int_{t\partial\Omega}\frac{1}{|\nabla j_\Omega|}\,d\mathcal H^{N-1}\right)\,dt\\
&=\left(\int_0^1 f(t)^q\,t^{N-1}\,dt\right)\left(\int_{\partial\Omega}\frac{1}{|\nabla j_\Omega|}\,d\mathcal H^{N-1}\right).
\end{align*}

Now, on $\partial\Omega$ one has the geometric identity
\begin{equation}
\label{scalar}
\frac{1}{|\nabla j_\Omega(x)|}=\langle x,\nu_\Omega(x)\rangle\qquad \text{for }\mathcal H^{N-1}\text{-a.e. }x\in\partial\Omega,
\end{equation}
(see, e.g., \cite[Lemma 2.3]{BM}), and 
so, applying the divergence theorem, it follows that
\begin{equation}\label{eq:Lq}
\int_\Omega \varphi^q\,dx
= N|\Omega|\int_0^1 f(t)^q\,t^{N-1}\,dt.
\end{equation}

Again by coarea and the same change of variables, we have  that 
\begin{equation}
\label{lunedi}
\int_\Omega |\nabla\varphi|^p\,dx
=\left(\int_0^1 |f'(t)|^p\,t^{N-1}\,dt\right)\left(\int_{\partial\Omega}|\nabla j_\Omega|^{p-1}\,d\mathcal H^{N-1}\right).
\end{equation}
Using \eqref{scalar}, we rewrite
\[
\int_{\partial\Omega}|\nabla j_\Omega|^{p-1}\,d\mathcal H^{N-1}
=\int_{\partial\Omega}\frac{1}{\langle x,\nu_\Omega\rangle^{p-1}}\,d\mathcal H^{N-1}.
\]
Since, by convexity and the definition of inradius, one has
\[
\langle x,\nu_\Omega(x)\rangle\ge r_\Omega\qquad\text{for }\mathcal H^{N-1}\text{-a.e. }x\in\partial\Omega,
\]
(see, e.g., \cite[Lemma 2.1]{BM}),
we get that 
\[
\frac{1}{\langle x,\nu_\Omega\rangle^{p-1}}
=\frac{\langle x,\nu_\Omega\rangle}{\langle x,\nu_\Omega\rangle^{p}}
\le \frac{\langle x,\nu_\Omega\rangle}{r_\Omega^{p}},
\]
which, used in \eqref{lunedi}, gives
\begin{equation}\label{eq:Dir}
\int_\Omega |\nabla\varphi|^p\,dx
\le \frac{N|\Omega|}{r_\Omega^p}\int_0^1 |f'(t)|^p\,t^{N-1}\,dt.
\end{equation}

Combining \eqref{eq:Lq}--\eqref{eq:Dir} and recalling \eqref{eq:ball-quotient} we  get
\[
\lambda_{p,q}(\Omega)\le
\frac{\displaystyle\int_\Omega |\nabla\varphi|^p\,dx}
{\Bigl(\displaystyle\int_\Omega \varphi^q\,dx\Bigr)^{\frac pq}}
\le
\frac{|\Omega|^{1-\frac pq}}{r_\Omega^p}\,
\frac{\displaystyle N|\Omega|\int_0^1 |f'|^p t^{N-1}dt}{\displaystyle \bigl(N|\Omega|\int_0^1 f^q t^{N-1}dt\bigr)^{\frac pq}}
=\frac{\omega_N^{\frac{p}{q}-1}\,|\Omega|^{1-\frac{p}{q}}}{r_\Omega^p}\,\lambda_{p,q}(B_1),
\]
which is equivalent to \eqref{BM}.

Now we  assume that equality holds in \eqref{BM}. Then equality must hold in the pointwise bound
$\langle x,\nu_\Omega(x)\rangle\ge r_\Omega$ for $\mathcal H^{N-1}$-a.e.\ $x\in\partial\Omega$, hence
\[
\langle x,\nu_\Omega(x)\rangle=r_\Omega \qquad\text{for }\mathcal H^{N-1}\text{-a.e.\ }x\in\partial\Omega.
\]
In particular, the competitor $\varphi=f\circ j_\Omega$ is optimal for $\lambda_{p,q}(\Omega)$ and thus it is a weak solution of
\[
-\Delta_p \varphi = \lambda_{p,q}(\Omega)\,\varphi^{\,q-1}\qquad \text{in }\Omega.
\]
By interior regularity, $\varphi\in C^{1,\alpha}_{\mathrm{loc}}(\Omega)$, hence $j_\Omega=f^{-1}\circ\varphi$ is $C^1$ in $\Omega\setminus\{0\}$, and the boundary identity above forces $\Omega$ to be a ball by the characterization of equality in the inradius inequality for convex sets (see, again, \cite[Lemma 2.1]{BM}). This concludes the proof. \end{proof}

\begin{cor}\label{cor:upper-inradius-perimeter}
Let $N\ge 2$. Let $1\le q<p<\infty$ and let $\Omega\subset\mathbb{R}^N$ be a convex bounded open set. Then the following sharp upper bound  holds:
\begin{equation}
\label{eq:upper boundinra}
\lambda_{p,q}(\Omega)\,\le \frac{(N\omega_N)^\frac{p-q}{q}\,\lambda_{p,q}(B_1)}{r_{\Omega}^{\frac{pq+p-q}{q}} P(\Omega)^{\frac{p-q}{q}}}.
\end{equation}
Moreover,  the equality in \eqref{eq:upper boundinra} is attained if and only if $\Omega$ is a ball. 

\end{cor}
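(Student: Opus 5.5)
The plan is to derive \eqref{eq:upper boundinra} directly from Theorem \ref{teo:upperbound}, combined with the upper bound in \eqref{pervol}. Theorem \ref{teo:upperbound} gives
\[
\lambda_{p,q}(\Omega)\le \frac{\omega_N^{\frac{p-q}{q}}\,\lambda_{p,q}(B_1)}{r_\Omega^p\,|\Omega|^{\frac{p-q}{q}}}.
\]
Since $\frac{p-q}{q}>0$, any lower bound on $|\Omega|$ in terms of $r_\Omega P(\Omega)$ can be substituted here. The lower bound in \eqref{pervol} gives $|\Omega|\ge \frac{r_\Omega P(\Omega)}{N}$, so that
\[
|\Omega|^{-\frac{p-q}{q}}\le N^{\frac{p-q}{q}}\,r_\Omega^{-\frac{p-q}{q}}\,P(\Omega)^{-\frac{p-q}{q}}.
\]
Inserting this and collecting the powers of $r_\Omega$, namely $p+\frac{p-q}{q}=\frac{pq+p-q}{q}$, gives exactly \eqref{eq:upper boundinra} with the constant $(N\omega_N)^{\frac{p-q}{q}}\lambda_{p,q}(B_1)$.

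For the equality case, suppose first that $\Omega$ is a ball. Then equality holds in Theorem \ref{teo:upperbound}, and also $|B_r|=\frac{r\,P(B_r)}{N}$, so both inequalities in the chain are equalities. Conversely, suppose equality holds in \eqref{eq:upper boundinra}. Both steps of the chain are inequalities in the same direction, and the exponent $\frac{p-q}{q}$ is strictly positive, so each step must be an equality. In particular equality holds in \eqref{BM}, and Theorem \ref{teo:upperbound} then forces $\Omega$ to be a ball.

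The only point needing care is this equality characterization. It rests entirely on the rigidity statement in Theorem \ref{teo:upperbound}, which I take as given. One then only needs to check that the chain cannot be an equality overall while being strict at one step, and this follows because all factors are positive and finite.
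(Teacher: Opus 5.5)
Your proposal is correct and matches the paper's proof: the paper also chains Theorem \ref{teo:upperbound} with the lower bound $|\Omega|\ge r_\Omega P(\Omega)/N$ from \eqref{pervol}, and notes that both inequalities are equalities on balls. For the converse of the equality case, the paper leaves the argument implicit, and your step (equality in the full chain forces equality in \eqref{BM}, hence $\Omega$ is a ball) supplies exactly that missing piece.
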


\begin{proof}
\noindent The result is obtained  by combining Theorem \ref{teo:upperbound}  with the inequality
\[\frac{1}{|\Omega|}\leq \frac N {  r_{\Omega} P(\Omega) },\]
which derives  from   \eqref{pervol} and holds as equality on the balls. Indeed,   we have that 
\[\lambda_{p,q}(\Omega)\le \frac{\omega_N^\frac{p-q}{q}\,\lambda_{p,q}(B_1)}{ r_\Omega^p |\Omega|^\frac{p-q}{q}}\leq \frac{(N \omega_N)^\frac{p-q}{q}\,\lambda_{p,q}(B_1)}{ r_{\Omega}^{\frac{pq+p-q}{q}}\, P(\Omega)^{\frac{p-q}{q}}}.\]
 \end{proof}

\appendix
\section{Ill-posedness  in the class of simply connected open sets}\label{AppA}

In this section, we show that the minimization and maximization problems for the shape functional $\mathcal{F}_{p,q}$ defined in \eqref{prodotto} are, in general, both ill-posed, even when restricted to the class of simply connected open sets. 

More precisely, we first observe, in Proposition \ref{prop:infimum}, that, for $1\le q<p<\infty$ or $1<q=p<\infty$, the infimum is zero and then, in Proposition \ref{prop:supremum},  we show that, for $p=2$, the supremum of  $\mathcal{F}_{p,q}$ is infinity.  

\begin{prop}\label{prop:infimum}
Let $1 \leq q < p < \infty$  or $1<q=p<\infty$. Then
\begin{equation}\label{infsimply}
\inf \big\{\mathcal{F}_{p,q}(\Omega):   \Omega\subset \mathbb{R}^{N} \text{ simply connected open set }\big\}=0.
\end{equation}
\end{prop}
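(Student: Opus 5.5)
We need a family of simply connected open sets with $\mathcal F_{p,q}\to0$. That means $\lambda_{p,q}$ stays bounded below away from infinity, while the perimeter grows much faster than the volume power. The natural plan is to fix a ball and attach very oscillating, thin "hair" to its boundary. The hair inflates the perimeter while barely changing the volume, and $\lambda_{p,q}$ is controlled by monotonicity under inclusion.

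\textbf{Construction.} Let $B=B_1$. For $n\in\mathbb N$, let $\Omega_n$ be the union of $B$ with $n$ disjoint thin cylinders ("spikes") $C_{n,k}$. Each spike has length $1$, cross-section radius $\rho_n$, is attached radially to $\partial B$ and points outward. The bases sit at $n$ pairwise distant points of $\partial B$; this is possible once $\rho_n$ is small compared with the spacing of the points. Each $\Omega_n$ is open and simply connected, being a ball with disjoint star-shaped protrusions glued along discs. Alternatively, one can simply take a ball with one long thin tube of length $\ell_n\to\infty$ and radius $\rho_n$.

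\textbf{Estimates.}
\begin{itemize}
\item Since $B\subset\Omega_n$, monotonicity of $\lambda_{p,q}$ with respect to inclusion (the $W^{1,p}_0$ spaces are nested) gives $\lambda_{p,q}(\Omega_n)\le\lambda_{p,q}(B)$.
\item The volume satisfies $\omega_N\le|\Omega_n|\le\omega_N+n\,\omega_{N-1}\rho_n^{N-1}$.
\item The perimeter satisfies $P(\Omega_n)\ge n(N-1)\omega_{N-1}\rho_n^{N-2}$, the lateral area of the spikes.
\end{itemize}
Choose $\rho_n$ so that $n\rho_n^{N-1}\to0$ while $n\rho_n^{N-2}\to\infty$, for instance $\rho_n=n^{-1/(N-1)-\delta}$ with small $\delta>0$ when $N\ge3$. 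For $N=2$ the lateral area is $2$ per spike, so $P(\Omega_n)\ge 2n\to\infty$ directly.

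\textbf{Conclusion.} With these choices $|\Omega_n|$ stays bounded and bounded away from $0$, while $P(\Omega_n)\to\infty$. Hence
\[
\mathcal F_{p,q}(\Omega_n)\le \lambda_{p,q}(B)\,\frac{|\Omega_n|^{p(1-\frac1p+\frac1q)}}{P(\Omega_n)^p}\to0 .
\]
This holds in both regimes, $q<p$ and $q=p$; for $q=p$ the volume exponent is simply $1$. Since $\mathcal F_{p,q}\ge0$, this gives \eqref{infsimply}.

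The only mildly delicate step is checking that the spikes can be chosen disjoint and that $\Omega_n$ is simply connected and open with the stated perimeter bound. To avoid any placement issue, I would use the single-tube variant: a ball joined to a straight cylinder of radius $\rho_n$ and length $\ell_n$, with $\ell_n\rho_n^{N-1}\to0$ and $\ell_n\rho_n^{N-2}\to\infty$ (for $N=2$, simply $\ell_n\to\infty$, $\ell_n\rho_n\to0$). This set is trivially simply connected, as the union of two convex sets with connected convex intersection. The perimeter bound then follows from the lateral surface of the part of the tube outside the ball.
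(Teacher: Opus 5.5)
Your proof is correct and follows the paper's route. Both arguments bound $\lambda_{p,q}(\Omega_n)$ by the constant of a fixed ball contained in every $\Omega_n$, using monotonicity under inclusion, keep $|\Omega_n|$ bounded, and let $P(\Omega_n)\to\infty$. The only difference is the choice of sets: the paper uses oscillatory perturbations trapped between $B_{1/2}$ and $B_2$, whose construction it only references for $N=2$, whereas your single-tube variant is fully explicit in every dimension, and its simple connectedness follows directly from van Kampen (two convex open sets with convex intersection).
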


\begin{proof}
We exhibit a sequence of simply connected open sets along which the functional $\mathcal{F}_{p,q}$ tends to zero. To this end, let $\{\Omega_n\}_{n\in\mathbb{N}}$ be a sequence of smooth perturbations of the unit ball $B_1$ such that

\begin{equation}\label{perturbation}
B_{1/2}\subset\Omega_n\subset B_2\qquad\text{and}\qquad P(\Omega_n)\to+\infty \text{ as } n\to\infty.
\end{equation}

Such a sequence can be constructed, for instance, by considering increasingly oscillatory perturbations of the boundary while preserving simple connectivity (see, for example, \cite{BBP} for an explicit construction in dimension $N=2$).

By \eqref{perturbation}, we have that $|\Omega_n|\le|B_2|$ for all $n$. Moreover, the monotonicity of $\lambda_{p,q}$ with respect to set inclusion yields
$$\lambda_{p,q}(\Omega_n)\le \lambda_{p,q}(B_{1/2}).$$
Therefore, using the definition \eqref{prodotto}, we obtain
\[
\mathcal{F}_{p,q}(\Omega_n)
=
\lambda_{p,q}(\Omega_n)
\left(
\frac{|\Omega_n|^{1-\frac{1}{p}+\frac{1}{q}}}{P(\Omega_n)}
\right)^p
\le
\lambda_{p,q}(B_{1/2})
\left(
\frac{|B_2|^{1-\frac{1}{p}+\frac{1}{q}}}{P(\Omega_n)}
\right)^p.
\]
Since $P(\Omega_n)\to+\infty$ as $n\to\infty$, it follows that
\[
\mathcal{F}_{p,q}(\Omega_n)\to 0.
\]

This proves \eqref{infsimply}.
\end{proof}

The proof of the next proposition  heavily relies on the validity of the following Hardy inequality 
\begin{equation}\label{eq:hardy}
\int_A \frac{|u|^2}{d_A^2}\,dx\leq c  \int_A |\nabla u|^2\,dx\,,  \qquad \forall u\in C_0^\infty(A),
\end{equation}
that holds, with a positive constant $c$, on every planar simply connected open set $A$. The  optimal constant 
\[
\mathfrak h_2(A):=
\inf_{u\in C_0^\infty(A)}
\left\{
\int_A |\nabla u|^2\,dx
:
\int_A \frac{|u|^2}{d_A^2}\,dx =1
\right\}
\]
satisfies the uniform lower bound, due to  Ancona \cite[pag 208]{Ancona}, 
\begin{equation}\label{hardy}
\mathfrak h_2(A)\ge \frac1{16}.
\end{equation}

\begin{prop}\label{prop:supremum}
Let $p=2$ and $1\le q<2$. Then
\begin{equation}\label{supsimply}
\sup \big\{\mathcal{F}_{2,q}(\Omega): \Omega\subset\mathbb{R}^N \text{ simply connected open set}\big\}=+\infty.
\end{equation}
\end{prop}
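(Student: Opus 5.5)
The plan is to exhibit, in the plane, a family of simply connected open sets $\Omega_n$ with $|\Omega_n|$ and $P(\Omega_n)$ bounded above and below, but with $\lambda_{2,q}(\Omega_n)\to+\infty$. The key mechanism is that the perimeter in the sense of De Giorgi does not see sets of zero $\mathcal H^{1}$-measure inside the domain, for instance slits. Removing finitely many segments from an open set therefore changes neither $|\Omega|$ nor $P(\Omega)$, while it drastically reduces the distance function. Since the Hardy inequality \eqref{eq:hardy}, with Ancona's uniform constant \eqref{hardy}, is a planar statement, I would carry out the construction for $N=2$. I am not sure how the case $N\geq 3$ should be handled: the statement is written for general $\mathbb{R}^N$, but the tool invoked is genuinely two-dimensional.

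\emph{Step 1: lower bound for $\lambda_{2,q}$ via Hardy.} Set $\beta=\frac{2q}{2-q}$ and take $u\in C^\infty_0(A)$ on a simply connected planar $A$. Writing $|u|^q=(|u|/d_A)^q\,d_A^q$ and applying H\"older with exponents $\frac2q$ and $\frac{2}{2-q}$ gives
\[
\int_A|u|^q\,dx\le\Big(\int_A\frac{|u|^2}{d_A^2}\,dx\Big)^{\frac q2}\Big(\int_A d_A^{\beta}\,dx\Big)^{\frac{2-q}{2}}.
\]
Combining this with \eqref{eq:hardy} and \eqref{hardy}, I expect
\[
\lambda_{2,q}(A)\ge \frac1{16}\Big(\int_A d_A^{\beta}\,dx\Big)^{-\frac{2-q}{q}} .
\]
This holds for every simply connected planar open set $A$, convex or not.

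\emph{Step 2: the comb construction.} Let $Q=(0,1)^2$ and, for $n\ge 2$, define
\[
\Omega_n:=Q\setminus\bigcup_{k=1}^{n-1}\Big\{\tfrac kn\Big\}\times\Big(0,1-\tfrac1n\Big].
\]
The removed slits hang from the bottom side of the square. The set $\Omega_n$ is open, and it is simply connected because each tooth is attached to the upper strip $(0,1)\times(1-\frac1n,1)$. Since the slits are $\mathcal L^2$-negligible, we have $|\Omega_n|=1$ and $P(\Omega_n)=P(Q)=4$ in De Giorgi's sense. Every point of $\Omega_n$ lies either in a vertical tooth of width $\frac1n$ or in the top strip of height $\frac1n$, so $d_{\Omega_n}\le \frac1n$. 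Hence $\int_{\Omega_n}d_{\Omega_n}^\beta\,dx\le n^{-\beta}$.

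\emph{Step 3: conclusion.} By Step 1 we get $\lambda_{2,q}(\Omega_n)\ge \frac1{16}\,n^{\beta\frac{2-q}{q}}=\frac1{16}\,n^2$. Therefore
\[
\mathcal F_{2,q}(\Omega_n)=\lambda_{2,q}(\Omega_n)\,\frac{|\Omega_n|^{1+\frac2q}}{P(\Omega_n)^2}\ge \frac{n^2}{256}\to+\infty .
\]
The main point needing care is the validity of the Hardy inequality on the non-smooth domain $\Omega_n$, since its boundary contains slits. Ancona's bound holds for arbitrary simply connected planar proper open sets, with no regularity required, so I expect no obstruction there. The other delicate point is the identification $P(\Omega_n)=4$: I would check it directly, using the fact that $\chi_{\Omega_n}=\chi_Q$ a.e.
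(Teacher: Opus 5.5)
Your argument is complete and correct for $N=2$, and in that case it is the paper's proof: the same comb, the same H\"older step combined with Ancona's constant $\frac1{16}$, and the same bound $d_{\Omega_n}\le\frac1n$. (Your slits $\{\frac kn\}\times(0,1-\frac1n]$, closed at the top, are in fact the right choice to make the set open.) However, the proposition concerns simply connected open sets in $\mathbb{R}^N$ for arbitrary $N$, and you leave $N\ge 3$ unresolved, so as written the proof has a genuine gap. It cannot be closed by rerunning Step~1 in $\mathbb{R}^N$, because simple connectivity gives no uniform Hardy inequality when $N\ge 3$. For instance, in $\mathbb{R}^3$ a ball minus an interior segment is simply connected, yet the segment has zero $2$-capacity, so the Hardy inequality fails there.

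The missing idea is to reduce to the planar case by a cylinder construction and slicing. Take $\Omega_n:=A_n\times(0,1)^{N-2}$, where $A_n$ is your planar comb. It is open and simply connected, being a product of simply connected sets. It coincides a.e.\ with the unit cube, so $|\Omega_n|=1$ and $P(\Omega_n)=P((0,1)^N)=2N$, independent of $n$. For $u\in C^\infty_0(\Omega_n)$ and every $y\in(0,1)^{N-2}$ one has $u(\cdot,y)\in C^\infty_0(A_n)$, hence
\[
\int_{\Omega_n}|\nabla u|^2\,dz\ \ge\ \int_{(0,1)^{N-2}}\int_{A_n}|\nabla_x u|^2\,dx\,dy\ \ge\ \lambda_{2,q}(A_n)\int_{(0,1)^{N-2}}\|u(\cdot,y)\|^2_{L^q(A_n)}\,dy\ \ge\ \lambda_{2,q}(A_n)\,\|u\|^2_{L^q(\Omega_n)}.
\]
The last inequality is Jensen's inequality for the convex function $t\mapsto t^{2/q}$, applied to $y\mapsto\|u(\cdot,y)\|^q_{L^q(A_n)}$ on the unit cube. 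This is exactly where $q<2$ and $\mathcal L^{N-2}((0,1)^{N-2})=1$ are used. Therefore $\lambda_{2,q}(\Omega_n)\ge\lambda_{2,q}(A_n)\ge\frac{n^2}{16}$ by your planar Steps 1--2, and $\mathcal F_{2,q}(\Omega_n)\ge\frac{n^2}{64N^2}\to+\infty$. This is precisely how the paper handles general $N$.
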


\begin{proof}
We construct a sequence of simply connected open sets $\{\Omega_n\}_{n\in\mathbb N}$ such that
\[
\mathcal{F}_{2,q}(\Omega_n)\to+\infty
\qquad\mbox{as } n\to\infty.
\]

For every $n\in\mathbb N$, let
\[
A_n=(0,1)^2\setminus \left(\bigcup_{i=1}^{n-1}\left\{ \frac{i}{n} \right\} \times \left(0, 1-\frac{1}{n}\right)\right),
\]
and define
\[
\Omega_n=A_n\times (0,1)^{N-2}.
\]
The set $\Omega_n$ is open and simply connected. Moreover, since the slits removed from $A_{n}$ are one-dimensional, they do not affect the Lebesgue measure of $(0,1)^2$, hence
\[
\mathcal L^2(A_n)=1
\qquad\mbox{and therefore}\qquad
\mathcal L^N(\Omega_n)=\mathcal L^2(A_n)\,\mathcal L^{N-2}\big((0,1)^{N-2}\big)=1
\]
where  $\mathcal L^k(B)$ stands for the $k$-dimensional Lebesgue measure of (measurable) set $B\subseteq \mathbb R^k$. We first show that
\begin{equation}\label{lambda_lower_new}
\lambda_{2,q}(\Omega_n)\ge \lambda_{2,q}(A_n).
\end{equation}
To this end, let $u\in C_0^\infty(\Omega_n)$. For every $y\in(0,1)^{N-2}$, the function $x\mapsto u(x,y)$ belongs to $C_0^\infty(A_n)$.  
Hence, since  using Fubini's theorem and the definition of $\lambda_{2,q}(A_n)$, we obtain
\[
\begin{split}
\int_{\Omega_n} |\nabla u|^2\,dz
&\ge \int_{(0,1)^{N-2}}\int_{A_n} |\nabla_x u(x,y)|^2\,dx\,dy \\
&\ge \lambda_{2,q}(A_n)\int_{(0,1)^{N-2}}
\|u(\cdot,y)\|_{L^q(A_n)}^2\,dy.
\end{split}
\]
Since $q<2$ and $\mathcal L^{N-2}\big((0,1)^{N-2}\big)=1$, we have
\[
\int_{(0,1)^{N-2}}
\|u(\cdot,y)\|_{L^q(A_n)}^2\,dy
\ge
\left(\int_{(0,1)^{N-2}}
\|u(\cdot,y)\|_{L^q(A_n)}^q\,dy\right)^{2/q}
=
\|u\|_{L^q(\Omega_n)}^2.
\]
Hence
\[
\int_{\Omega_n} |\nabla u|^2\,dz
\ge
\lambda_{2,q}(A_n)\|u\|_{L^q(\Omega_n)}^2.
\]
Taking the infimum over all $u\in C_0^\infty(\Omega_n)$ such that $\|u\|_{L^q(\Omega_n)}=1$, we obtain \eqref{lambda_lower_new}. We now estimate $\lambda_{2,q}(A_n)$ from below. We argue as  in \cite[Subsection 1.2]{PZ} (see also \cite{vBe}): 
by using Hardy's inequality \eqref{eq:hardy} on the planar simply connected open set $A_n$ and then  applying Hölder's inequality, we get that, for every $n\in\mathbb N$ and for every  $v \in C_0^\infty(A_n)$, it holds 
\[
\int_{A_n} |v|^q \, dx \le \left( \int_{A_n} \frac{|v|^2}{d_{A_n}^{\,2} } \, dx \right)^\frac{q}{2}\, \|d_{A_n}\|_{L^{\frac{2\,q}{2-q}}(A_n)}^{{ q}}
\le (\mathfrak{h}_2(A_n))^{-\,\frac{q}{2}}   \left(\int_{A_n} |\nabla v|^2 \, dx \right)^{\frac{q}{2}}\,\|d_{A_n}\|_{L^{\frac{2\,q}{2-q}}(A_n)}^{{ q}}.
\]
Using the uniform estimate \eqref{hardy} and the inequality
\[
\|d_{A_n}\|_{L^{\frac{2q}{2-q}}(A_n)}
\le r_n\, \mathcal L^2(A_n)^{\frac{2-q}{2q}}
= r_n,
\]
where $r_n:=\|d_{A_n}\|_{L^\infty(A_n)}$,  we obtain that, for every $n\in \mathbb N$, 
\begin{equation}\label{eq:hardy2_new}
\lambda_{2,q}(A_n)\ge
\frac{\mathfrak h_2(A_n)}{\|d_{A_n}\|_{L^{\frac{2q}{2-q}}(A_n)}^2}
\ge
\frac{1}{16\,\|d_{A_n}\|_{L^{\frac{2q}{2-q}}(A_n)}^2} \ge \frac{1}{16\, r_n^2}.
\end{equation}

Finally, thanks to the fact that $|A_n\triangle (0,1)^2|=0$, we have that, for every $n\in \mathbb N$,  $A_n$ has finite perimeter equal to that of the unit square (see   \cite[Proposition 3.38]{AFP}), namely
\[
P(A_n)=P\big((0,1)^2\big)=4.
\]
Then we have
\[
P(\Omega_n)=P(A_n)\,\mathcal{L}^{N-2}\big((0,1)^{N-2}\big)+2\,\mathcal{L}^{2}(A_n)
=4\,\mathcal{L}^{N-2}\big((0,1)^{N-2}\big)+2\,\mathcal{L}^{2}(A_n)=6.
\]

Combining \eqref{lambda_lower_new} and \eqref{eq:hardy2_new}  with the above fact, we deduce
\[
\mathcal{F}_{2,q}(\Omega_n)
=
\lambda_{2,q}(\Omega_n)
\left(
\frac{\big( \mathcal L^N(\Omega_n) \big)^{1-\frac12+\frac1q}}{P(\Omega_n)}
\right)^2
\ge
c\, r_n^{-2},
\]
for some constant $c>0$ independent of $n$. Since,  by construction of the set $A_n$, we have that  $r_n\to 0$  as $n\to\infty$, it follows that
\[\mathcal{F}_{2,q}(\Omega_n)\to+\infty, \qquad \hbox{ as } n\to \infty \]
which proves \eqref{supsimply}.
\end{proof}


\begin{thebibliography}{100}

\bibitem{AFP} L.~Ambrosio, N.~Fusco, D.~Pallara, {\it Functions of Bounded
  Variation and Free Discontinuity Problems}, Oxford Mathematical Monographs,
  Oxford University Press, Oxford, 2000.

\bibitem{AFI} G.~Anello, F.~Faraci, A.~Iannizzotto, {\it On a problem of Huang concerning best constants in Sobolev embeddings}, {\it  Ann. Mat. Pura Appl. }~(4), {\bf 194} (2015), 767--779.


\bibitem{Ancona} A.~Ancona, On strong barriers and an inequality of Hardy for
  domains in $\mathbb{R}^N$, {\it J. London Math. Soc.}~(2) {\bf 34} (1986),
  274--290.
\newblock \href{https://doi.org/10.1112/jlms/s2-34.2.274}{https://doi.org/10.1112/jlms/s2-34.2.274}

\bibitem{AK} D.~H.~Armitage, \"U.~Kuran, The convexity of a domain and the
  superharmonicity of the signed distance function, {\it Proc. Amer. Math.
  Soc.} {\bf 93} (1985), 598--600.
\newblock \href{https://doi.org/10.1090/S0002-9939-1985-0776186-8}{https://doi.org/10.1090/S0002-9939-1985-0776186-8}

\bibitem{vBe} M.~van~den~Berg, Estimates for the torsion function and Sobolev
  constants, {\it Potential Anal.} {\bf 36} (2012), 607--616.
\newblock \href{https://doi.org/10.1007/s11118-011-9246-9}{https://doi.org/10.1007/s11118-011-9246-9}

\bibitem{bor73} C.~Borell, Integral inequalities for generalized concave or
  convex functions, {\it J. Math. Anal. Appl.} {\bf 43} (1973), 419--440.
\newblock \href{https://doi.org/10.1016/0022-247X(73)90083-8}{https://doi.org/10.1016/0022-247X(73)90083-8}

\bibitem{gar98} R.~J.~Gardner, G.~Zhang, Affine inequalities and radial mean
  bodies, {\it Amer. J. Math.} {\bf 120} (1998), no.~3, 505--528.
\newblock \href{https://doi.org/10.1353/ajm.1998.0021}{https://doi.org/10.1353/ajm.1998.0021}

\bibitem{Bra1} L.~Brasco, On principal frequencies and isoperimetric ratios
  in convex sets, {\it Ann. Fac. Sci. Toulouse Math.}~(6) {\bf 29} (2020),
  no.~4, 977--1005.
\newblock \href{https://doi.org/10.5802/afst.1653}{https://doi.org/10.5802/afst.1653}

\bibitem{Bra2} L.~Brasco, On principal frequencies and inradius in convex
  sets, {\it Bruno Pini Math. Anal. Semin.} {\bf 9}, Univ.~Bologna, Alma
  Mater Stud., Bologna, (2018), 78--101.
\newblock \href{https://doi.org/10.6092/issn.2240-2829/8945}{https://doi.org/10.6092/issn.2240-2829/8945}

\bibitem{BM} L.~Brasco, D.~Mazzoleni, On principal frequencies, volume and
  inradius in convex sets, {\it NoDEA Nonlinear Differ. Equ. Appl.} {\bf 27}
  (2020), no.~12, Paper No.~12, 27~pp.
\newblock \href{https://doi.org/10.1007/s00030-019-0614-2}{https://doi.org/10.1007/s00030-019-0614-2}

\bibitem{bradua} L.~Brasco, Convex duality for principal frequencies, {\it
  Math. Eng.} {\bf 4} (2022), no.~4, Paper No.~032, 28~pp.
\newblock \href{https://doi.org/10.3934/mine.2022032}{https://doi.org/10.3934/mine.2022032}

\bibitem{BPZ1} L.~Brasco, F.~Prinari, A.~C.~Zagati, A comparison principle
  for the Lane--Emden equation and applications to geometric estimates, {\it
  Nonlinear Anal.} {\bf 220} (2022), Paper No.~112847.
\newblock \href{https://doi.org/10.1016/j.na.2022.112847}{https://doi.org/10.1016/j.na.2022.112847}


\bibitem{BPZ2} L.~Brasco, F.~Prinari, A.~C.~Zagati,  Sobolev embeddings and distance functions, {\it Adv. Calc. Var.}, {\bf 17} (2024),
1365--1398

\bibitem{BBPconvex} L.~Briani, G.~Buttazzo, F.~Prinari, Some inequalities
  involving perimeter and torsional rigidity, {\it Appl. Math. Optim.} {\bf
  84} (2021), 2727--2741.
\newblock \href{https://doi.org/10.1007/s00245-020-09727-7}{https://doi.org/10.1007/s00245-020-09727-7}

\bibitem{BBP} L.~Briani, G.~Buttazzo, F.~Prinari, A shape optimization
  problem on planar sets with prescribed topology, {\it J. Optim. Theory
  Appl.} {\bf 193} (2022), 760--784.
\newblock \href{https://doi.org/10.1007/s10957-021-01870-7}{https://doi.org/10.1007/s10957-021-01870-7}

\bibitem{BBPtorsion} L.~Briani, G.~Buttazzo, F.~Prinari, Inequalities between
  torsional rigidity and principal eigenvalue of the $p$-Laplacian, {\it Calc.
  Var. Partial Differ. Equ.} {\bf 61} (2022), no.~2, Paper No.~78.
\newblock \href{https://doi.org/10.1007/s00526-021-02129-9}{https://doi.org/10.1007/s00526-021-02129-9}

\bibitem{BB} D.~Bucur, G.~Buttazzo, {\it Variational Methods in Shape
  Optimization Problems}, Progress in Nonlinear Differential Equations and
  their Applications {\bf 65}, Birkh\"auser Boston, Inc., Boston, MA, 2005.

\bibitem{DPG} F.~Della~Pietra, N.~Gavitone, Sharp bounds for the first
  eigenvalue and the torsional rigidity related to some anisotropic operators,
  {\it Math. Nachr.} {\bf 287} (2014), no.~2--3, 194--209.
\newblock \href{https://doi.org/10.1002/mana.201200296}{https://doi.org/10.1002/mana.201200296}

\bibitem{DM} P.~Dr\'abek, R.~Man\'asevich, On the closed solution to some
  nonhomogeneous eigenvalue problems with $p$-Laplacian, {\it Differ. Integral
  Equ.} {\bf 12} (1999), 773--788.
\newblock \href{https://doi.org/10.57262/die/1367241475}{https://doi.org/10.57262/die/1367241475}

\bibitem{Er} G.~Ercole, Absolute continuity of the best Sobolev constant of a bounded domain, J. Math. Anal. Appl., {\bf 404} (2013), 420--428.


\bibitem{FGL} I.~Fragal\`a, F.~Gazzola, J.~Lamboley, Sharp bounds for the
  $p$-torsion of convex planar domains, in: {\it Geometric Properties for
  Parabolic and Elliptic PDE's}, Springer INdAM Ser. {\bf 2}, Springer, Milan,
  (2013), 97--115.
\newblock \href{https://doi.org/10.1007/978-88-470-2841-8_7}{https://doi.org/10.1007/978-88-470-2841-8\_7}

\bibitem{GVB} M.~van~den~Berg, N.~Gavitone, On functionals involving the
  $p$-capacity and the $q$-torsional rigidity, {\it Calc. Var. Partial Differ.
  Equ.} {\bf 64} (2025), no.~8, Paper No.~245, 21~pp.
\newblock \href{https://doi.org/10.1007/s00526-025-03081-8}{https://doi.org/10.1007/s00526-025-03081-8}

\bibitem{lar} S.~Larson, A bound for the perimeter of inner parallel bodies,
  {\it J. Funct. Anal.} {\bf 271} (2016), 610--619.
\newblock \href{https://doi.org/10.1016/j.jfa.2016.02.022}{https://doi.org/10.1016/j.jfa.2016.02.022}

\bibitem{Makai} E.~Makai, On the principal frequency of a membrane and the
  torsional rigidity of a beam, in: {\it Studies in Mathematical Analysis and
  Related Topics}, Stanford Univ. Press, Stanford, (1962), 227--231.

\bibitem{Po} G.~P\'olya, Two more inequalities between physical and
  geometrical quantities, {\it J. Indian Math. Soc.} {\bf 24} (1960),
  413--419.

\bibitem{PZ} F.~Prinari, A.~C.~Zagati, On the sharp Makai inequality, {\it J.
  Convex Anal.} {\bf 31} (2024), no.~2, 709--732.

\bibitem{S} R.~Schneider, {\it Convex Bodies: The Brunn--Minkowski Theory}
  (2nd expanded edition), Encyclopedia Math. Appl. {\bf 151}, Cambridge Univ.
  Press, 2014.

\bibitem{JoSt} I.~Jo\'o, L.~Stach\'o, Generalization of an inequality of
  G.~P\'olya concerning the eigenfrequencies of vibrating bodies, {\it Publ.
  Inst. Math. (Beograd)} {\bf 31} (1982), 65--72.

\end{thebibliography}
\end{document}